\title[Knot homotopy in subspaces of the 3-sphere]
{Knot homotopy in subspaces of the 3-sphere}
\author{Yuya Koda}
\author{Makoto Ozawa}
\thanks{The first-named author is supported by JSPS Postdoctoral Fellowships for Research Abroad, and
by the Grant-in-Aid for Young Scientists (B), JSPS KAKENHI Grant Number 26800028.}
\address{
Department of Mathematics \newline 
\indent Hiroshima University, 1-3-1 Kagamiyama, Higashi-Hiroshima, 739-8526, Japan}
\email{ykoda@hiroshima-u.ac.jp}
\thanks{The second-named author is partially supported by Grant-in-Aid for Scientific
 Research (C) (No. 23540105 and No. 26400097), Japan Society for the Promotion of Science.}
\address{
Department of Natural Sciences, Faculty of Arts and Sciences \newline
\indent Komazawa University, 1-23-1 Komazawa, Setagaya-ku, Tokyo, 154-8525, Japan}
\email{w3c@komazawa-u.ac.jp}
\theoremstyle{plain}
\newtheorem*{theorem*}{Theorem}
\newtheorem*{lemma*} {Lemma}
\newtheorem*{corollary*} {Corollary}
\newtheorem*{proposition*}{Proposition}
\newtheorem*{conjecture*}{Conjecture}
\newtheorem{theorem}{Theorem}[section]
\newtheorem{lemma}[theorem]{Lemma}
\newtheorem{proposition}[theorem]{Proposition}
\theoremstyle{remark}
\newtheorem*{remark}{Remark}
\newtheorem*{notation}{Notation}
\newtheorem*{example}{Example}
\newtheorem{question}{Question}
\newtheorem{problem}{Problem}
\theoremstyle{definition}
\newtheoremstyle{citing}
  {}
  {}
  {\itshape}
  {}
  {\bfseries}
  {.}
  {.5em}
{\thmnote{#3}}
\theoremstyle{citing}
\newtheorem*{citingtheorem}{} 
\newcommand{\Integer}{\mathbb{Z}}
\newcommand{\Int}{\mathrm{Int}}
\newcommand{\MCG}{\mathcal{MCG}}
\newcommand{\Nbd}{\mathrm{Nbd}}
\newcommand{\tr}{\mathit{tr}}
\begin{document}
\maketitle

\begin{abstract}
We discuss an ``extrinsic" property of knots in a 3-subspace of the 3-sphere $S^3$ 
to characterize how the subspace is embedded in $S^3$. 
Specifically, we show that every knot in a subspace of the 3-sphere is transient 
if and only if the exterior of the subspace is a disjoint union of handlebodies, 
i.e. regular neighborhoods of embedded graphs, 
where a knot in a 3-subspace of $S^3$ is said to be transient if it can be  
moved by a homotopy within the subspace to the trivial knot in $S^3$. 
To show this, we discuss relation between 
certain group-theoretic 
and homotopic properties of knots in a compact 3-manifold, 
which can be of independent interest. 
Further, using the notion of transient knot, we define an integer-valued
invariant of knots in $S^3$ that we call the transient number.
We then show that the union of the sets of knots of unknotting
number one and tunnel number one is a proper subset of the set of knots of transient number one. 
\end{abstract}

\vspace{1em}

\begin{small}
\hspace{2em}  \textbf{2010 Mathematics Subject Classification}:
57M25; 57M15, 57N10, 57Q35 



\end{small}

\section*{Introduction}

In \cite{Eil49} Fox proposed a program to distinguish 3-manifolds 
by the differences in their ``knot theories". 
Following the program, Brody \cite{Bro60} re-obtained the topological classification 
of the 3-dimensional lens spaces 
using knot-theoretic invariants, which are the Alexander polynomials of knots suitably factored out 
so that it depends only on the homology classes of the knots.  
Bing's recognition theorem \cite{Bin58} can be regarded as another example of works that 
follow Fox's program, 
where the theorem asserts that 
a closed, connected 3-manifold $M$ is homeomorphic 
to the $3$-sphere if and only if every knot in $M$ can be moved by an isotopy 
to lie within a 3-ball. 
We note here that if we replace {\it isotopy} in this statement by {\it homotopy}, 
the assertion implies Poincar\'{e} Conjecture, which was 
proved by Perelman \cite{Per02, Per03a, Per03b}. 
Bing's recognition theorem 
was generalized by Hass-Thompson \cite{HT89} and Kobayashi-Nishi \cite{KN94} 
proving that 
a closed, connected 3-manifold $M$ admits a genus $g$ Heegaard splitting
if and only if there exists a genus $g$ handlebody $V$ embedded
in $M$ so that every knot in $M$ can be moved by an isotopy 
to lie within  $V$. 
We note that as mentioned in Nakamura \cite{Nak12}, 
the {\it homotopy} version of this statement holds when $g=1$ 
again by Poincar\'{e} Conjecture 
whereas that for higher genus case fails in general. 
A result of Brin-Johannson-Scott \cite{BJS85} 
can also be regarded as a 
work following Fox's program, 
which asserts that if every knot in $M$ 
can be moved by a homotopy to lie within a collar neighborhood of the boundary $\partial M$, 
then there exists a component $F$ of $\partial M$ such that the 
natural map $\pi_1(F) \to \pi_1(M)$ induced by the inclusion 
is surjective. 
In particular, for a compact, connected, orientable, irreducible, boundary-irreducible 3-manifold $M$, 
they proved that if every knot in $M$ 
can be moved by a  homotopy to lie within a collar neighborhood of $\partial M$, 
then $M$ is homeomorphic to the 3-ball or the product $\Sigma \times [0,1]$, 
where $\Sigma$ is a closed, orientable surface of genus at least one. 
In the present paper, we will consider a relative version of Fox's program, namely, 
we discuss ``(extrinsic) knot theories" 
in 3-subspaces of the 3-sphere $S^3$ 
in order to characterize how the 3-subspaces are embedded in $S^3$. 

Let $M$ be a compact, connected, proper 3-submanifold of $S^3$. 
We say that $M$ is {\it unknotted} if 
its exterior is a disjoint union of handlebodies. 
A famous theorem of Fox \cite{Fox48} says that 
each $M$ can be re-embedded in $S^3$ so that 
its image is unknotted. 
A re-embedding satisfying this property are  
called a {\it Fox re-embedding}. 
Intuitively speaking, unknottedness of $M \subset S^3$ implies that 
$M$ is embedded $S^3$ in one of the ``simplest" ways. 
We note that if $M$ is a handlebody, an unknotted $M$ in $S^3$ is actually unique 
up to isotopy by Waldhausen \cite{Wal68}.   
The uniqueness up to isotopy and a reflection 
holds for each knot exterior by 
a celebrated result of Gordon-Luecke \cite{GL89}. 
However, in other cases $M$ usually admits many 
mutually non-isotopic Fox re-embeddings into $S^3$. 

The unknottedness of a 3-submanifold and so the existence of a Fox re-embedding 
can be considered for an arbitrary closed, connected $3$-manifold. 
Scharlemann-Thompson \cite {ST05} generalized the above theorem of Fox 
by proving that any compact, connected, proper 3-submanifold of irreducible non-Haken 3-manifolds $N$ 
admits a Fox re-embedding into $N$ or $S^3$. 
Another generalization is given by Nakamura \cite{Nak12} who proved that 
a compact, connected, proper 3-submanifold $M$ of a closed, connected $3$-manifold $N$ 
admits a Fox re-embedding into $N$ 
if every knot in $N$ can be moved by an isotopy 
to lie within $M$. 
Here we remark that the property that  
\begin{center}
every knot in $N$ can be moved by an isotopy 
to lie within $M$
\end{center}
does {\it not} imply that $M$ itself is unknotted in $N$. 
This can be seen for example by considering the case where 
$N=S^3$ and $M$ is not unknotted. 
In this paper, we will show that the property 
of a compact, connected, proper 3-submanifold $M$ of $S^3$ that 
\begin{center}
every knot in $M$ can be moved by a homotopy in $M$ to be the trivial knot in $S^3$  
\end{center}
implies that $M$ is unknotted in $S^3$. 
Following Letscher \cite{Let12}, we say that a knot $K$ in $M$ is 
{\it transient in $M$} 
if $K$ can be deformed by a homotopy in $M$ to be the trivial knot in $S^3$; 
$K$ is said to be {\it persistent in $M$} otherwise. 
Using this terminology, we can state our main theorem as follows:  
\begin{citingtheorem}[Theorem \ref{thm:transiency and unknottedness}]
Let $M$ be a compact, connected, proper $3$-submanifold of $S^3$. 
Then every knot in $M$ is transient in $M$ if and only if $M$ is unknotted. 
\end{citingtheorem}
Roughly speaking, the above theorem implies that a (homotopic) property of knots in $M$ 
deduces an isotopic property of $M$ inside $S^3$. 
We remark that the property that a given knot $K \subset M$ 
is transient is {\it extrinsic} with respect to the embedding $M \hookrightarrow S^3$ in the sense that it 
depends not only 
the pair $(M, K)$ but also 
the way how $M$ is embedded in $S^3$. 
Indeed, we can find a persistent knot in a certain genus two handlebody 
$V$ embedded in $S^3$ in such a way that there exists 
another embedding of $V$ into $S^3$ such that 
the re-embedded knots in the re-embedded $V$ is transient. 
See Section \ref{sec:Transient knots in a subspace of the 3-sphere}. 
Now, we can say a little more precisely what is the relative version of Fox's program; 
we expect that extrinsic properties for knots in a compact, connected, proper 3-submanifold of 
$S^3$ distinguish 
the isotopy class of $M$ inside $S^3$.  
Our main theorem is a first step for the program. 
To obtain the theorem, we discuss relation between 
certain group-theoretic 
and homotopic properties of knots in a compact 3-manifold, 
which can be of independent interest, see Section 
\ref{sec:Knots filling up a handlebody}. 

Given a knot $K$ in a compact, connected, proper 3-submanifold $M$ of $S^3$, 
it is actually difficult in general to detect if $K$ is persistent in $M$. 
One method was provided by Letscher \cite{Let12} that uses 
what he calls the {\it persistent Alexander polynomial}. 
In Section \ref{sec:Construction of persistent knots}, we provide examples of persistent knots 
in a 3-subspace of $S^3$ whose persistency are shown 
by using the notion of {\it persistent lamination} and 
{\it accidental surface}. 

In Section \ref{sec:Transient number of knots}, 
we will introduce an integer-valued invariant, 
{\it transient number}, of knots in $S^3$ whose definition is related to Theorem \ref{thm:transiency and unknottedness} 
as follows. 
Given a knot $K$ in $S^3$, we may consider a system of simple arcs
in $S^3$ with their endpoints in $K$ such that 
$K$ is transient in a regular neighborhood of the union of $K$ and the arcs. 
The transient number $\tr (K)$ is then defined  
to be the minimal number of simple arcs
in such a system. 
By an easy observation, we see that the transient number is 
bounded from above by both the unknotting number and the tunnel number.  
Further, we will give a knot $K$ that attains 
$\tr(K) = 1$ while $u(K) = t (K) = 2$, where $u(K)$ and $t(K)$ are the unknotting number 
and the tunnel number of $K$, respectively  (see Proposition \ref{prop:difference between tr and u, t}). 
In other words, the union of the sets of knots of unknotting
number one and tunnel number one is actually a proper subset 
of the set of knots of transient number one. 
The final section contains some
concluding remarks and open questions.
\vspace{1em}

Throughout this paper, we will work in the piecewise linear category. 
\begin{notation}
Let $X$ be a subset of a given polyhedral space $Y$. 
Throughout the paper, we will denote the interior of 
$X$ by $\Int \thinspace X$. 
We will use $\Nbd (X; Y)$ to denote a closed regular neighborhood of $X$ in $Y$. 
If the ambient space $Y$ is clear from the context, 
we denote it briefly by $\Nbd (X)$. 
Let $M$ be a 3-manifold. 
Let $L \subset M$ be a submanifold with or without boundary. 
When $L$ is 1 or 2-dimensional, we write 
$E(L) = M \setminus \Int \thinspace \Nbd (L)$. 
When $L$ is of 3-dimension, we write 
$E(L) = M \setminus \Int \thinspace L$. 
We shall often say 
surfaces, compression bodies, 
e.t.c. in an ambient manifold 
to mean the isotopy classes of them. 
\end{notation}

\section{Knots filling up a handlebody}
\label{sec:Knots filling up a handlebody}

Let $F_g$ be a free group of rank $g$ with a basis $X_g = \{ x_1, x_2, \ldots, x_g \}$. 
We set $X_g^{\pm } = X_g \cup \{ {x_1}^{-1}, {x_2}^{-1} , \ldots, {x_g}^{-1} \}$. 
A {\it word} on $X_g$ is a finite sequence of letters of $X_g^{\pm}$. 
For an element $x$ of a group $G$, we denote by $c_G(x)$ (or simply by $c(x)$) its conjugacy class in $G$. 

Let $G$ be a group with a decomposition $G = G_1 * G_2$. 
Then $G_1$ and $G_2$ are called {\it free factors} of $G$. 
In particular, if $G_2 \neq 1$, then $G_1$ is called a {\it proper} free factor of $G$. 
Following Lyon \cite{Lyo80}, 
we say that an element $x$ of $G$ {\it binds} $G$ if $x$ is not contained in 
any proper free factor of $G$. 
Thus, for example, an element of $\Integer$ binds $\Integer$ if and only if it is non-trivial. 
We can also see that an element of a rank 2 free group 
$F_2 = \langle x_1, x_2 \rangle$ binds $F_2$ if and only if 
it is not a power of primitive element, 
where an element of a free group is said to be {\it primitive} if 
it is a member of some of its free basis. 
For example $x_1x_2x_1x_2$ does not bind $F_2$ while $x_1x_2x_1{x_2}^3$ binds $F$. 
See e.g. Osborne-Zieschang \cite{OZ81} and Cho-Koda \cite{CK14}. 
Primitive elements of the rank 2 free group have been well understood 
by e.g. Osborne-Zieschang \cite{OZ81} and Cohen-Metzler-Zimmermann \cite{CMZ81}
whereas their classification in a free group 
of higher rank is known to be a hard problem. 
See Puder-Wu \cite{PW14} (and also Shpilrain \cite{Shp05}) and 
Puder-Parzanchevski \cite{PP15} 
for some of the deepest results on this problem. 
On the contrary, 
an algorithm to detect if a given element $x$ of a free group $F_g$ 
binds $F_g$ is given by Stallings \cite{Sta99} using the combinatorics of 
its Whitehead graph. 
See Section \ref{sec:Concluding remarks} (\ref{remark:Stallings}). 
It follows immediately from the definition that if $x$ binds $G$, 
then any element of its conjugacy class $c(x)$ binds $G$. 
In fact, if $x \in G_1$ for a decomposition $G=G_1 * G_2$, 
then $a^{-1} x a \in a^{-1} G_1 a$ and $F = (a^{-1} G_1 a) * (a^{-1} G_2 a)$ is also a decomposition 
of $G$ for any $a \in G$. 

Let $K$ be an oriented knot in a 3-manifold $M$. 
We denote by $c_{\pi_1 (M)} (K)$ (or simply by $c(K)$) the conjugacy class in $\pi_1 (M)$ 
defined by the homotopy class of $K$. 
Here we recall that two oriented knots $K$ and $K'$ in $M$ are homotopic in $M$ if and only if 
$c_{\pi_1 (M)} (K) = c_{\pi_1 (M)} (K')$.  
We say that $K$ {\it binds} $\pi_1 (M)$ if an element (so every element) of $c(K)$ binds $\pi_1(M)$. 
It is clear by definition that, if $\bar{K}$ is the knot $K$ with the reversed orientation, 
$K$ binds $\pi_1(M)$ if and only if so does $\bar{K}$. 
For this reason, we can say whether or not a knot $K$ binds $\pi_1(M)$ ignoring the orientation of $K$. 
 
Recall that a (possibly disconnected) surface $F$ in a 
3-manifold is said to be {\it compressible} if 
\begin{enumerate}
\item
there exists a component of $F$ that bounds a 3-ball in $M$; or  
\item
there exists an embedded disk $D$ in $M$, called a {\it compression disk} for $F$, 
such that $D \cap F = \partial D$ and $\partial D$ is an essential simple closed curve on $F$. 
\end{enumerate}
Otherwise, $F$ is said to be {\it incompressible}. 
A 3-manifold is said to be {\it irreducible} if it contains no incompressible 2-spheres. 
A 3-manifold is said to be {\it boundary-irreducible} 
if its boundary is incompressible. 
The following lemma is a generalization of Lyon \cite[Corollary $1$]{Lyo80}. 
\begin{lemma}
\label{lem:generalization of Corollary 1 of Lyon}
Let $M$ be a compact, connected, 
irreducible $3$-manifold with non-empty boundary. 
Let $K$ be an oriented simple closed curves 
in the boundary of $M$. 
Then $\partial M \setminus K$ 
is incompressible in $M$ if and only if $K$ binds $\pi_1 (M)$. 
\end{lemma}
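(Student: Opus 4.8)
The plan is to prove the two implications separately, in each case passing between the geometric condition on the surface $F = \partial M \setminus \Int \thinspace \Nbd (K)$ (that is, $\partial M$ cut along $K$, whose boundary consists of parallel copies of $K$) and the algebraic condition on the conjugacy class $c(K)$ in $\pi_1(M)$. The bridge in both directions is the correspondence between compressing disks for a boundary surface and free product decompositions of $\pi_1(M)$: on the geometric side this is the Loop Theorem, and on the algebraic side it is the van Kampen theorem together with the Kurosh--Grushko uniqueness of free product decompositions. Throughout, irreducibility of $M$ is used to discard $2$-sphere components, so that compressibility may be read as the existence of a compressing disk.

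First I would treat the implication that if $K$ binds $\pi_1(M)$ then $F$ is incompressible, arguing by contraposition. Suppose $D$ is a compressing disk for $F$; then $D$ is properly embedded in $M$ with $\partial D$ disjoint from $K$ and essential in $F$. Cutting $M$ along $D$ and applying van Kampen expresses $\pi_1(M)$ as a free product: if $D$ separates $M$ into a piece containing $K$ and a complementary piece $M_2$, then $\pi_1(M) = \pi_1(M_1) * \pi_1(M_2)$, while if $D$ is non-separating then $\pi_1(M) = \pi_1(E(D)) * \Integer$. In either case some conjugate of $[K]$ lies in the free factor coming from the side carrying $K$, and the complementary factor is nontrivial: when $\partial D$ is essential in $\partial M$ this follows from irreducibility, since otherwise the complementary piece would be a ball and $\partial D$ would bound a disk in $\partial M$; and when $\partial D$ bounds a disk in $\partial M$ one instead gets $[K] = 1$, which lies in the trivial proper free factor. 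Hence $[K]$ lies in a proper free factor and $K$ does not bind $\pi_1(M)$.

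For the converse I would again argue by contraposition: assuming $K$ does not bind, so that $\pi_1(M) = A * B$ with $B \neq 1$ and some conjugate of $[K]$ lying in $A$, I would produce a compressing disk for $F$. Since $M$ is irreducible, a nontrivial free product decomposition of $\pi_1(M)$ cannot be carried by an essential sphere and must therefore be realized by compressing disks for $\partial M$; concretely, a maximal system of disjoint, non-parallel essential disks cuts $M$ into a compression body and pieces with incompressible boundary, and the induced decomposition of $\pi_1(M)$ into freely indecomposable factors and a free group is, by Kurosh--Grushko uniqueness, a common refinement of $A * B$. As $B \neq 1$, this lets me select from the system an essential disk $D$ one side of whose complement carries the factor $A$; since $[K]$ is conjugate into $A$, the curve $K$ can then be isotoped on $\partial M$ off $\partial D$. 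The resulting disk is disjoint from $K$ with $\partial D$ essential in $\partial M$, hence essential in $F$, so it is a compressing disk for $F$.

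The main obstacle is the last step of the converse: the algebraic statement that $[K]$ is conjugate into the free factor $A$ does not by itself force the curve $K$ to be disjoint from, or even isotopic off, the curve $\partial D$ on $\partial M$, since geometric intersection numbers on $\partial M$ are not controlled by the splitting of $\pi_1(M)$. Making the disk disjoint from $K$ is exactly where a \emph{relative} form of the Loop Theorem and Dehn's Lemma is required -- a tower construction performed in the complement of $K$, keeping $K$ fixed throughout -- so that the compression is found in $\partial M \setminus K$ rather than merely in $\partial M$. This is the point at which the argument genuinely generalizes Lyon's Corollary~$1$ of \cite{Lyo80}, and I expect it to be the technical heart of the proof; the remaining bookkeeping (disconnected $\partial M$, and the degenerate cases $[K]=1$ and $\pi_1(M)=1$) is routine.
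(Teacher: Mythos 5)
Your first implication (a compressing disk for $\partial M \setminus K$ forces $[K]$ into a proper free factor, via van Kampen applied to $M$ cut along $D$, with irreducibility and the Poincar\'e conjecture ruling out a trivial complementary factor in the separating case and the $\Integer$ factor doing the work in the non-separating case) is essentially the paper's own argument for that direction, and it is fine.

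The converse is where the proposal has a genuine gap, and you have in fact flagged it yourself: producing a compressing disk for $\partial M$ that realizes (part of) the splitting $\pi_1(M)=A*B$ and then ``isotoping $K$ off $\partial D$'' is not a step that can be carried out --- as you note, conjugacy of $[K]$ into $A$ gives no control over the geometric position of the fixed embedded curve $K$ relative to $\partial D$ --- and deferring it to an unspecified ``relative Loop Theorem / tower construction in the complement of $K$'' leaves the technical heart of the lemma unproved. (The intermediate Grushko step is also delicate: an arbitrary free factor $A$ of $\pi_1(M)$ need not be carried by one side of a single disk in a chosen maximal disk system.) The paper closes this gap with the Stallings--Lyon ``binding tie'' argument: from $\pi_1(M)=G_1*G_2$ with $[K]\in G_1$ and $G_2\neq 1$ one builds a map $f\colon M\to X$, where $X$ is obtained by joining mapping cylinders over $K(G_1,1)$ and $K(G_2,1)$ at a point $a$, arranged (using irreducibility) so that $f^{-1}(a)$ is a finite union of compressing disks for $\partial M$; one minimizes $|f^{-1}(a)\cap K|$ over all such $f$, and if this intersection is nonempty the uniqueness of reduced words in a free product forces some subarc of $K$ between consecutive points of $f^{-1}(a)$ to map to a null-homotopic loop, whereupon $f$ (not $K$) is homotoped to reduce the intersection, contradicting minimality. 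Hence some compressing disk in $f^{-1}(a)$ misses $K$ entirely. This is precisely the mechanism your sketch lacks; without it the ``does not bind $\Rightarrow$ compressible'' direction is not established.
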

\begin{proof}
We fix an orientation and a base point $v$ of $K$. 

Suppose first that $K$ does not bind $\pi_1 (M,v)$. 
Then there exists a decomposition $\pi_1 (M, v) = G_1 * G_2$ with 
$G_2 \neq 1$ and $[K] \in G_1 $. 
Let $X_i$ be a $K(G_i, 1)$-space, and let $p$ be a point not  in $X_1 \cup X_2$. 
We define $\hat{X}_1$ and $\hat{X}_2$ 
to be the mapping cylinders of maps from 
$p$ into $X_1$ and $X_2$, respectively. 
Let $X$ denote the space obtained
by identifying the copy of $p$ in $\hat{X}_1$ with that of 
$p$ in $\hat{X}_2$. 
By the construction,  we have 
$\pi_1 (X) = G_1 * G_2$ and 
$\pi_2 (X_1) = \pi_2 (X_2) = 0$. 
Thus there exists a continuous map 
$f : M \to X$ satisfying the following properties. 
\begin{enumerate}
\item
$f(v) = a$, 
\item
the induced map $f_* : \pi_1 (M) \to \pi_1 (X)$ is an isomorphism with 
$f_* (G_i) = \pi_1(X_i)$ for $i \in \{ 1 , 2 \}$, and 
\item
$f^{-1} (a)$ consists of a finite number of compression disks for $\partial M$. 
\end{enumerate}
Here we use the assumption that $M$ is irreducible. 
We may assume that $|f^{-1} (a) \cap K|$ is minimal among all continuous maps $M \to X$ 
satisfying (1)--(3). 
Suppose that $f^{-1} (a) \cap K \neq \emptyset$.  
Then $f(K)$ is a loop in $X$ with the base point $a$ that can be decomposed as 
\[ f(K) = \alpha_1 * \alpha_2 * \cdots * \alpha_r , \]
where each $\alpha_i$ lies in $\hat{X}_1$ or $\hat{X}_2$, and $\alpha_i$, $\alpha_{i+1}$ do not lie in one of
 $\hat{X}_1$ and $\hat{X}_2$ at the same time. 
We note that $r > 1$. 
Suppose that none of $[\alpha_i]$ is trivial in $G_1$ or $G_2$. 
Then 
$[\alpha_1], [\alpha_2], \ldots , [\alpha_r]$ is a {\it reduced sequence}, 
that is, 
$[\alpha_i]$ is in $G_1$ or $G_2$, and $[\alpha_i]$, $[\alpha_{i+1}]$ do not lie in one of $G_1$ and $G_2$ at the same time. 
On the other hand, $[f(K)]$ lies in $G_1$ by the assumption. 
This contradicts the uniqueness of reduced sequences, see Magnus-Karrass-Solitar \cite[Theorem 4.1]{MKS76}.  
Thus at least one of $[\alpha_1], [\alpha_2], \ldots , [\alpha_r]$ is trivial. 
Consequently, there exists a subarc $\alpha$ of $K$ such that 
\begin{itemize}
\item
$\alpha \cap f^{-1} (a) = \partial \alpha$, 
\item
$f(\alpha) \subset X$ is a contractible loop, and 
\item
$\alpha$ is essential in $\partial M$ cut off by $\partial f^{-1}(a)$. 
\end{itemize}
Then using a standard technique as in Lyon \cite[Theorem 2]{Lyo80}, 
$f$ can be deformed by a homotopy to be a continuous map 
$f' : M \to X$ satisfying the above (1)--(3), and 
$|f'^{-1} (a) \cap K| < |f^{-1} (a) \cap K|$. 
This contradicts the minimality of $|f^{-1} (a) \cap K|$. 
Thus we have $f^{-1} (a) \cap K = \emptyset$. 
This implies that $\partial M \setminus K$ is compressible in $M$. 

Next suppose that there exists a compression disk $D$ for $\partial M \setminus K$ in $M$. 
Suppose that $D$ separates $M$ into two components $M_1$ and $M_2$, where $K$ lies in $M_1$. 
Then $\pi_1(M)$ can be decomposed as $\pi_1(M) = \pi_1(M_1) * \pi_1(M_2)$, 
where $[K] \in \pi_1(M_1)$. 
If $\pi_1(M_2) = 1$, then $M_2 \cong B^3$ by 
the Poincar\'{e} conjecture proved in Perelman \cite{Per02, Per03a, Per03b}. 
This is a contradiction. 
Hence $\pi_1(M_2) \neq 1$, which implies that $K$ does not bind $\pi_1(M)$. 
Suppose that $D$ does not separate $M$. 
Let $M'$ be $M$ cut off by $D$. 
Then we have $\pi_1(M) = \pi_1(M') * \Integer$ and $[K]$ is in $\pi_1(M')$. 
Hence, again, $K$ does not bind $\pi_1(M)$. 
\end{proof}

Let $M$ be a compact connected 3-manifold. 
Let $K$ and $K'$ be knots in $M$. 
We denote by $K \overset{M}{\sim} K'$ if $K$ and $K'$ are homotopic in $M$. 
Let $K$ be a knot in the interior of $M$. 
We say that $K$ {\it fills} up $M$ if for any knot $K'$ in the interior of $M$ such that $K \overset{M}{\sim} K'$, 
$E(K')$ is irreducible and boundary-irreducible. 

\begin{example}
The knot $K_1$ shown on the left-hand side in Figure \ref{fig:example_filling} does not fill up the handlebody $V$ (becaue there exists a compression disk $D$ for $\partial V$ in $V \setminus K_1$ as shown in the figure) 
while the knot $K_2$ shown on the right-hand side fills up $V$ 
(cf. Lemma \ref{lem:existence of a knot filling up a handlebody}). 
\begin{center}
\begin{overpic}[width=10cm,clip]{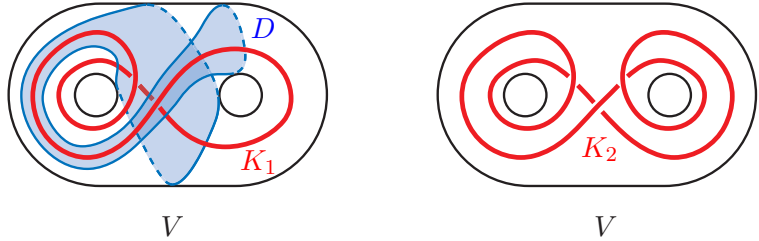}
  \linethickness{3pt}  
  \put(58,0){$V$}
  \put(222,0){$V$}
  \put(88,25){\color{red} $K_1$}
  \put(217,30){\color{red} $K_2$}
  \put(92,75){\color{blue} $D$}
\end{overpic}
\captionof{figure}{The knot $K_1$ does not fill up $V$ while 
$K_2$ fills up $V$.}
\label{fig:example_filling}
\end{center}
\end{example}

By a {\it graph}, we mean the underlying space of a (possibly disconnected) finite 1-dimensional simplicial complex. 
A handlebody is a 3-manifold homeomorphic to 
a closed regular neighborhood of a connected graph embedded in the 3-sphere. 
The {\it genus} of a handlebody is defined to be the genus of its boundary surface.  
For a handlebody $V$, a {\it spine} 
is defined to be a graph $\Gamma$ embedded in $V$ so that 
$V$ collapses onto $\Gamma$. 
By a {\it $1$-vertex spine} we mean a spine with a single vertex. 
In other words, a $1$-vertex spine is a spine of a handlebody that is homeomorphic to a {\it rose}, i.e. a wedge of circles.

In the remaining of the section we fix the following: 
\begin{itemize}
\item
A handlebody $V$ of genus $g$ at least $1$ with a base point $v_0$. 
\item
A $1$-vertex spine $\Gamma_0$ of $V$ having the vertex at $v_0$. 
\item
A standard basis $X = \{ x_1, x_2 , \ldots , x_g \}$ of $\pi_1(\Gamma_0, v_0) \cong \pi_1(V, v_0)$, 
that is, we can assign names $e_1^0$, $e_2^0 , \ldots , e_g^0$ and orientations 
to the edges of $\Gamma_0$ 
so that $x_i$ corresponds to the oriented edge $e_i^0$ for each $i \in \{ 1 , 2 , \ldots , g \}$. 
\end{itemize}
Under the above setting, we identify $\pi_1 (V)  = \pi_1 (V, v_0)$ with the 
free group $F$ with the basis $X$. 

Let $\{ y_1, y_2 , \ldots , y_g \}$ be a basis of $F$, 
where each $y_i$ is a word on the standard basis $X$. 
We say that a $1$-vertex spine $\Gamma$ of $V$  having the vertex at $v_0$ 
is {\it compatible with} the basis $\{ y_1, y_2 , \ldots , y_g \}$ if 
we can assign names $e_1$, $e_2 , \ldots , e_g$ and orientations 
to the edges of $\Gamma$ 
so that 
a word on $X$ corresponding to the oriented edge $e_i$ is $y_i$ for each $i \in \{ 1 , 2 , \ldots , g \}$.

\begin{lemma}
\label{lem:spine corresponding to a given basis}
For each basis $Y = \{ y_1, y_2 , \ldots , y_g \}$ of $F$, 
there exists a $1$-vertex spine of $V$  with the vertex at $v_0$ that is compatible with $Y$. 
\end{lemma}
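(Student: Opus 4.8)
The plan is to reduce the statement to realizing the elementary Nielsen transformations of the free group $F$ by elementary geometric modifications of a $1$-vertex spine, and then to use the classical fact that these transformations generate $\Aut(F)$.

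Since $Y = \{y_1, y_2, \ldots, y_g\}$ is a basis of $F$, the assignment $x_i \mapsto y_i$ extends to an automorphism $\phi \in \Aut(F)$. By Nielsen's theorem (see, e.g., \cite{MKS76}), $\phi$ is a finite composition of elementary Nielsen transformations, namely permutations $x_i \leftrightarrow x_j$, inversions $x_i \mapsto x_i^{-1}$, and transvections $x_i \mapsto x_i x_j^{\varepsilon}$ or $x_i \mapsto x_j^{\varepsilon} x_i$ with $i \neq j$ and $\varepsilon = \pm 1$. It therefore suffices to establish the following inductive step: if $\Gamma$ is a $1$-vertex spine of $V$ with vertex $v_0$ compatible with some basis $Z = \{z_1, z_2, \ldots, z_g\}$, and $Z'$ is obtained from $Z$ by a single such transformation, then $V$ admits a $1$-vertex spine $\Gamma'$ with vertex $v_0$ compatible with $Z'$. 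Starting from $\Gamma_0$, which is compatible with $X$ by construction, and following the factorization of $\phi$, this produces a spine compatible with $Y$.

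For the inductive step I would realize the three types of moves as follows. A permutation or an inversion requires no change of the underlying embedded graph at all: one only relabels the edges, respectively reverses the orientation of a single edge, so that the words read off the oriented edges become the entries of $Z'$. For a transvection, say $z_i \mapsto z_i z_j$, I would perform an edge slide: keeping the vertex fixed at $v_0$, drag the terminal end of the oriented edge $e_i$ along a parallel push-off of $e_j$ taken inside the solid-torus neighborhood $\Nbd(e_j) \subset V$, and back to $v_0$. The new graph $\Gamma'$ is again an embedded wedge of $g$ loops based at $v_0$; the edge $e_i$ has been replaced by a loop of homotopy class $z_i z_j$, while every other edge is left untouched, so $\Gamma'$ is compatible with $Z' = \{z_1, \ldots, z_i z_j, \ldots, z_g\}$. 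Sliding along $e_j$ with the reversed orientation, or sliding the initial end of $e_i$ rather than the terminal one, realizes the remaining variants $z_i \mapsto z_i z_j^{-1}$ and $z_i \mapsto z_j^{\pm 1} z_i$.

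The step that needs genuine care is checking that the edge slide produces a spine of the \emph{same} handlebody $V$, with the vertex still at $v_0$. Because the slide is carried out entirely inside $V$, within a regular neighborhood of $e_j \cup \{v_0\}$, the resulting graph $\Gamma'$ stays embedded and $V$ still collapses onto it; equivalently, the regular neighborhood of $\Gamma'$ is isotopic to $V$. This is the standard fact that handle slides relate the spines of a fixed handlebody, and it is where the routine geometric verification is concentrated. Granting it, the inductive step holds, and composing the moves prescribed by the Nielsen factorization of $\phi$ yields the desired $1$-vertex spine of $V$ with vertex $v_0$ compatible with $Y$.
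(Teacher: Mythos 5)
Your proposal is correct and follows essentially the same route as the paper: factor the automorphism $x_i \mapsto y_i$ into elementary Nielsen transformations and realize each one geometrically on the spine. The only cosmetic difference is that the paper packages each elementary move as a homeomorphism $f_i$ of $V$ fixing $v_0$ and takes the composite $f_n \circ \cdots \circ f_1(\Gamma_0)$, while you describe the same moves directly as relabelings, orientation reversals, and edge slides of the spine.
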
 
\begin{proof}
Let $\varphi $ be the automorphism of $F$ that maps 
$x_i$ to $y_i$ for each $i \in \{ 1 , 2 , \ldots , g \}$. 
By Nielsen \cite{Nie24}, the map $\varphi$ can be expressed as the composition 
$\varphi_n \circ \cdots \circ \varphi_2 \circ \varphi_1$, 
where each $\varphi_i$ is one of the four {\it elementary Nielsen transformations}. 
We refer the reader to Magnus-Karrass-Solitar \cite{MKS76} for 
details on the elementary Nielsen transformations. 
For each elementary Nielsen transformation $\varphi_i$, there exists a homeomorphism $f_i$ of $V$ that fixes $v_0$ such that 
$f_i (\Gamma_0)$ is compatible with the basis $\{ \varphi_i (x_1), \varphi_i  (x_2), \ldots , \varphi_i  (x_g) \}$. 
It follows that 
$f_n \circ \cdots \circ f_2 \circ f_1 (\Gamma_0)$ is a required $1$-vertex spine of $V$. 
\end{proof}

Let $M$ be a compact orientable irreducible 3-manifold with non-empty boundary with a base point $v$. 
We say that $M$ satisfies the {\it strong bounded Kneser conjecture} ({\it SBKC}) if 
whenever we have subgroups $G_1$, $G_2$ of $\pi_1(M, v)$ with $G_1 \cap G_2 = 1$, 
$\pi_1 (M,v) = G_1 * G_2$ and $G_i \ncong 1$ ($i=1,2$), 
there exists a properly embedded disk $D$ in $M$ containing $v$ such that 
$D$ separates $M$ into two components $M_1$ and $M_2$ with 
${\iota_i}_* (\pi_1(M_i, v) ) = G_i$ ($i=1,2$), where 
$\iota_i : M_i \hookrightarrow M$ is the natural embedding. 
As we will see in the remark after the proof of 
Lemma \ref{lem:filling and binding}, 
there exists a 3-manifold that does not satisfy SBKC. 
It follows directly from Lemma \ref{lem:spine corresponding to a given basis} that 
a genus $g$ handlebody $V$ satisfies the SBKC. 
In fact, for each decomposition $\pi_1 (V,v_0) = G_1 * G_2$, we have a 1-vertex spine $\Gamma$ of $V$ 
having the vertex at $v_0$ that is compatible with the basis $\{ y_1, y_2, \ldots , y_g \}$, 
where $\{ y_1, y_2, \ldots, y_{g_1} \}$ is a basis of $G_1$ and 
$\{ y_{g_1 + 1}, y_{g_1 + 2}, \ldots, y_{g} \}$ is a basis of $G_2$. 
Using the spine $\Gamma$, we have the required disk $D$. 
We note that a sufficient condition for a manifold to satisfy the SBKC was given 
by Jaco \cite{Jac69} as follows.
\begin{lemma}[Jaco \cite{Jac69}]
\label{lem:Theorem of Jaco}
Let $M$ be a compact orientable irreducible $3$-manifold with non-empty, connected boundary. 
Suppose that $\pi_1(M)$ is {\it freely reduced}, that is, 
if whenever we have a decomposition $G = G_1 * G_2$ then none of 
$G_1$ and $G_2$ is a free group. 
Then $M$ satisfies the SBKC. 
\end{lemma}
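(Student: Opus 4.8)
The plan is to follow the same transversality scheme used in the proof of Lemma~\ref{lem:generalization of Corollary 1 of Lyon}, now applied to the whole decomposition $\pi_1(M,v) = G_1 * G_2$ rather than to a single conjugacy class. First I would build the model space: choosing $K(G_i,1)$-spaces $X_i$ and letting $\hat{X}_i$ be the mapping cylinder of a map from a point $p$ into $X_i$, I glue the two copies of $p$ to obtain $X = \hat{X}_1 \cup \hat{X}_2$ with distinguished point $a$. Then $\pi_1(X) = G_1 * G_2$, $\pi_2(X)=0$, and a neighborhood of $a$ in $X$ is an arc with $a$ in its interior. Since $f_*$ can be taken to be the given isomorphism carrying $G_i$ onto $\pi_1(\hat{X}_i)$, and since $M$ is irreducible, I obtain a map $f\colon M \to X$ with $f(v)=a$, inducing this isomorphism, and transverse to $a$, so that $S := f^{-1}(a)$ is a two-sided (here I use that $M$ is orientable) properly embedded surface containing $v$ that separates $M$ into $N_1 := f^{-1}(\hat{X}_1)$ and $N_2 := f^{-1}(\hat{X}_2)$ with $f_*(\pi_1(N_i)) \subseteq \pi_1(\hat{X}_i) = G_i$.

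Next I would simplify $S$. Using irreducibility of $M$, I discard the $2$-sphere components of $S$ (each bounds a ball, across which $f$ can be pushed), and using the loop theorem and Dehn's lemma I compress any compressible component, finally discarding boundary-parallel and trivial components; throughout, the properties $f(v)=a$, $f_*$ an isomorphism and $f_*(\pi_1(N_i)) \subseteq G_i$ are preserved. The crucial algebraic observation is that every component of the resulting surface is a disk: for a loop $\gamma$ in a component $F$ of $S$, pushing $\gamma$ into $N_1$ and into $N_2$ shows that $f_*([\gamma])$ lies simultaneously in a conjugate of $G_1$ and in a conjugate of $G_2$ inside $G_1 * G_2$, hence in the trivial group, since in a free product the intersection of a conjugate of one factor with a conjugate of another is trivial. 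Thus $\pi_1(F) \to \pi_1(M)$ is trivial, and incompressibility forces $\pi_1(F)=1$; as $M$ is irreducible, $F$ is a disk.

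At this stage $S$ is a disjoint union of disks, so cutting $M$ along $S$ exhibits $\pi_1(M)$ as the fundamental group of a graph of groups with trivial edge groups; hence $\pi_1(M)$ is the free product of the groups of the pieces of $M \setminus S$ together with a free group whose rank is the first Betti number of the dual graph. Here the hypothesis that $\pi_1(M)$ is freely reduced enters decisively: a nontrivial free factor is forbidden, so the dual graph carries no cycle (there is no non-separating disk, which would contribute a free $\Integer$), i.e. it is a tree, and moreover the free part of each piece group vanishes. I would then pass to a configuration realizing $G_1 * G_2$ with the fewest disks; a piece with trivial fundamental group is a ball and yields a boundary-parallel or redundant disk that can be absorbed, so every remaining piece has nontrivial, non-free fundamental group. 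A single disk through $v$ separating $M$ into $M_1,M_2$ then gives $\pi_1(M) = \pi_1(M_1) * \pi_1(M_2)$ with $f_*(\pi_1(M_i)) \subseteq G_i$; since $f_*$ is an isomorphism onto $G_1 * G_2$, this forces $f_*(\pi_1(M_i)) = G_i$ and hence, as $f_*$ was built to carry $G_i$ onto $\pi_1(\hat{X}_i)$, the strong conclusion $\iota_{i*}(\pi_1(M_i,v)) = G_i$.

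The hard part will be this last reduction: arranging that the two sides $N_1$ and $N_2$ are \emph{connected} and that $S$ is a \emph{single} disk, so that one separating disk (rather than merely a system of disks realizing some refinement of the decomposition) captures exactly $G_1$ and $G_2$. The obstruction is that the color-$1$ and color-$2$ pieces may be distributed along the dual tree, and no single edge of that tree separates all of one color from the other unless the tree is a single edge. The strategy is to connect components of $N_i$ by tubes running through the complementary region, redefining $f$ on each tube into the corresponding $\hat{X}_i$ and re-simplifying, while using the freely reduced hypothesis to guarantee that the only alternatives to such merging would manufacture a free $\Integer$ factor of $\pi_1(M)$, which is impossible; together with a minimality argument and the uniqueness of free-product decompositions, this should force the dual tree down to a single edge. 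Controlling that this tubing process terminates while keeping $f_*$ an isomorphism is, I expect, the main technical obstacle, and is precisely the content supplied by Jaco~\cite{Jac69}.
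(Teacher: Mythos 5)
The paper does not actually prove this lemma: it is stated purely as a citation of Jaco \cite{Jac69}, so there is no in-paper argument to compare yours against. Judged on its own, your sketch follows the classical Stallings--Jaco transversality scheme, and its first two thirds are sound in outline: the construction of $X=\hat{X}_1\cup_p\hat{X}_2$, the surgery reducing $S=f^{-1}(a)$ to incompressible pieces, and the algebraic observation that $\pi_1$ of each component of $S$ maps into the intersection of a conjugate of $G_1$ with a conjugate of $G_2$, hence trivially, so that each component is a disk. (Minor points: two-sidedness of $f^{-1}(a)$ comes for free from the trivial normal bundle of $a$ in the arc $\hat{X}_1\cup\hat{X}_2$ near $p$, not from orientability of $M$; and you should track that the component of $S$ through $v$ survives your discarding and compressing steps, since the SBKC demands a disk containing $v$ and the equality ${\iota_i}_*(\pi_1(M_i,v))=G_i$ on the nose, not merely up to conjugacy.)

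The genuine gap is the final reduction, and you name it yourself: after cutting along the disk system you have $\pi_1(M)=A_1*\cdots*A_k*F_r$ read off a graph of groups, and while the freely reduced hypothesis does kill the free part ($r\geq 1$ would exhibit $\Integer$ as a free factor, and a simply connected piece is a ball by irreducibility), nothing you have written forces the dual tree down to a single edge whose two sides realize exactly $G_1$ and $G_2$ based at $v$. The refinement $A_1*\cdots*A_k$ need only be compatible with $G_1*G_2$ up to the Kurosh rearrangements, so the color-$1$ and color-$2$ pieces can interleave along the tree, and the ``tubing'' you propose to merge them is asserted, not carried out: you give no complexity that decreases, no argument that the modified map still induces an isomorphism, and no mechanism producing the subgroups themselves rather than conjugates. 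Since this is exactly the step where the freely reduced hypothesis must do its decisive work and where Jaco's paper spends its effort, the proposal as written is an accurate roadmap to the known proof rather than a proof; to complete it you would need either to run a minimality argument on $|f^{-1}(a)|$ from the outset (as in the proof of Lemma \ref{lem:generalization of Corollary 1 of Lyon}) and show that minimality plus freely reduced forces a single separating disk through $v$, or to import the relevant lemma of \cite{Jac69} explicitly.
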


\begin{lemma}
\label{lem:filling and binding}
Let $M$ be a compact 
irreducible $3$-manifold with 
non-empty boundary. 
Let $K$ be an oriented knot in the interior of $M$. 
If $K$ binds $\pi_1(M)$, then $K$ fills up $M$. 
Moreover, the converse is true when $M$ satisfies the SBKC. 
\end{lemma}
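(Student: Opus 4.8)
The plan is to connect binding, filling, and the incompressibility criterion from Lemma \ref{lem:generalization of Corollary 1 of Lyon}. First I would establish the forward direction: suppose $K$ binds $\pi_1(M)$, and let $K'$ be any knot in $\Int M$ with $K \overset{M}{\sim} K'$. Since binding is a property of the conjugacy class $c(K)$ and homotopic oriented knots have equal conjugacy classes, $K'$ also binds $\pi_1(M)$. I want to show $E(K')$ is irreducible and boundary-irreducible. The key move is to view $\partial E(K')$ as consisting of $\partial M$ together with the torus $\partial \Nbd(K')$, and to relate a compression of this boundary to a compression of $\partial M \setminus K'$ after pushing $K'$ into the boundary. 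Concretely, $E(K')$ deformation retracts so that $\pi_1(E(K')) \cong \pi_1(M)$ with the meridian of $K'$ mapping to (a conjugate of) $[K']$; a compressing disk for $\partial \Nbd(K')$ would force $[K']$ to be trivial or to lie in a proper free factor, contradicting that $K'$ binds. For the $\partial M$ part of the boundary and for reducing spheres, I would invoke irreducibility of $M$ together with Lemma \ref{lem:generalization of Corollary 1 of Lyon} applied to a copy of $K'$ isotoped into $\partial M$, concluding $\partial M \setminus K'$ is incompressible, which upgrades to boundary-irreducibility of $E(K')$.

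For the converse, assume $M$ satisfies the SBKC and that $K$ does \emph{not} bind $\pi_1(M)$; I would produce a homotopic $K'$ with $E(K')$ compressible or reducible, showing $K$ does not fill up $M$. Since $K$ does not bind, there is a decomposition $\pi_1(M,v) = G_1 * G_2$ with $G_2 \neq 1$ and $[K] \in G_1$. Here the SBKC is exactly the tool that realizes this algebraic splitting geometrically: it yields a properly embedded disk $D$ separating $M$ into $M_1, M_2$ with ${\iota_i}_*(\pi_1(M_i)) = G_i$. Because $[K] \in G_1 = \pi_1(M_1)$, I can homotope $K$ to a knot $K'$ lying in $\Int M_1$. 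Then $D$ (pushed off slightly, or a parallel copy) separates $K'$ from $M_2$, and since $\pi_1(M_2) = G_2 \neq 1$ the piece $M_2$ is not a ball, so $D$ is an essential disk disjoint from $K'$; this gives a compression of $\partial E(K')$, so $E(K')$ fails to be boundary-irreducible. Hence $K'$ witnesses that $K$ does not fill up $M$.

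I expect the main obstacle to be the forward direction's handling of the \emph{new} boundary torus $\partial \Nbd(K')$ and the careful bookkeeping needed to rule out \emph{all} essential spheres and compressing disks in $E(K')$ simultaneously, rather than only those meeting $K'$ in a controlled way. In particular, translating ``$K'$ binds $\pi_1(M)$'' into incompressibility of the torus component requires knowing that a compressing disk for it cuts $K'$ off as a knot whose class lies in a proper free factor, which is where I would lean on the free-product/reduced-sequence machinery already used in the proof of Lemma \ref{lem:generalization of Corollary 1 of Lyon}. The cleanest route is probably to reduce boundary-compressions of $E(K')$ to compressions of $\partial M \setminus K'$ via an isotopy pushing $K'$ into $\partial M$, so that Lemma \ref{lem:generalization of Corollary 1 of Lyon} applies directly; making that reduction precise, while using irreducibility of $M$ to discard reducing spheres of $E(K')$, is the heart of the argument.
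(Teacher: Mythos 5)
Your converse direction is essentially the paper's argument: use the SBKC to realize the splitting $\pi_1(M,v)=G_1*G_2$ by a properly embedded disk $D$, homotope $K$ off $D$ into $M_1$ (the paper justifies this step by a minimal-intersection argument using the uniqueness of reduced sequences, rather than by assertion), and observe that $D$ is then a compression disk for $\partial M$ missing the homotoped knot, so that knot witnesses the failure of filling. The one omission is the case $G_1=1$: the SBKC as defined requires \emph{both} factors to be nontrivial, so it cannot be invoked there, and one must dispose of a null-homotopic $K$ separately (the paper does this in one line; a small sphere enclosing a trivial representative is incompressible in its exterior since $\pi_1(M)=G_2\neq 1$).

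The forward direction, however, contains a genuine gap. Your proposed reduction --- push $K'$ into $\partial M$ by an isotopy and then apply Lemma \ref{lem:generalization of Corollary 1 of Lyon} --- is not available: a knot in the interior of $M$ is in general not isotopic (often not even homotopic) into $\partial M$, and a homotopy would in any case destroy embeddedness, whereas Lemma \ref{lem:generalization of Corollary 1 of Lyon} applies only to a \emph{simple closed curve lying in} $\partial M$. No such reduction is needed. The paper's route is to rerun the \emph{second half} of the proof of that lemma with $K'$ left in the interior: a compression disk $D$ for $\partial M$ disjoint from $K'$ either separates $M$ into $M_1\ni K'$ and $M_2$, giving $\pi_1(M)=\pi_1(M_1)*\pi_1(M_2)$ with $[K']\in\pi_1(M_1)$ and $\pi_1(M_2)\neq 1$ (the Poincar\'e conjecture excludes $M_2\cong B^3$, which would make $\partial D$ inessential), or is non-separating, giving $\pi_1(M)=\pi_1(M')*\Integer$ with $[K']\in\pi_1(M')$; either way $K'$, hence $K$, fails to bind --- and this argument nowhere uses that $K'$ lies in the boundary. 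A reducing sphere is handled similarly via irreducibility of $M$. Your worry about the torus component $\partial\Nbd(K')$ of $\partial E(K')$ is fair in that the paper's proof silently treats only spheres and compressions of $\partial M$; but your sketch for it is off ($E(K')$ does not have $\pi_1\cong\pi_1(M)$, and it is the longitude, not the meridian, that carries $[K']$), and in any case the meridional compression is ruled out by an intersection-parity count rather than by the free-product machinery.
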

\begin{proof}
Suppose that $K$ does not fill up $M$. 
Then there exists an incompressible sphere or a compression disk $D$ for $\partial M$ in $M \setminus K'$, 
where $K'$ is a knot with $K \overset{M}{\sim} K'$. 
By the same argument as in the second half of the proof of Lemma \ref{lem:generalization of Corollary 1 of Lyon}, 
using $K'$ instead of $K$ in the proof, we can show that $K$ does not bind $\pi_1(M)$. 

Next, suppose that $M$ satisfies the SBKC, and 
$K$ does not bind $\pi_1 (M)$. 
We fix an orientation and a base point $v$ of $K$. 
There exist subgroups $G_1$, $G_2$ of $\pi_1(M, v)$ with $G_1 \cap G_2 = 1$, 
$\pi_1 (M, v) = G_1 * G_2$, $G_2 \ncong 1$, and $[K] \in G_1$.  
If $G_1 = 1$, then $K$ is contractible and thus we are done. 
Suppose that $G_1 \ncong 1$. 
Then by the SBKC, there exists a properly embedded disk $D$ in $M$ 
containing $v$ such that 
$D$ separates $M$ into two components $M_1$ and $M_2$ with 
${\iota_i}_* (\pi_1(M_i, v) ) = G_i$ ($i \in \{ 1 , 2 \}$), where 
$\iota_i : M_i \hookrightarrow M$ is the natural embedding. 
We may assume that $K$ is moved by a homotopy fixing $v$ so that 
$|K \cap D|$ is minimal. 
If $|K \cap D| = 0$, we are done. 
Suppose that $|K \cap D| > 0$. 
Then $[K]$ can be decomposed into a product 
$x_1 \cdot x_2 \cdots x_r$, 
where $x_i$ is in $G_1$ or $G_2$, and $x_i$, $x_{i+1}$ do not lie in one of $G_1$ and $G_2$ at the same time. 
We note that $r > 1$. 
Since $[K] \subset G_1$, at least one, say $x_{i_0}$, of  $x_1, x_2, \ldots, x_r$ is trivial. 
Then moving a neighborhood of the subarc of $K$ corresponding to $x_{i_0}$ by a homotopy, we can reduce $|K \cap D|$. 
This contradicts the minimality of  $|K \cap D|$. 
This completes the proof. 
\end{proof}

We remark that the converse of Lemma \ref{lem:filling and binding} is not true. 
This can be seen as follows. 
Let $\Sigma$ be a closed orientable surface of genus at least one.
Let $M$ be a 3-manifold obtained by 
attaching a $1$-handle $H$ to $\Sigma \times [0 , 1]$ so as to 
connect $D \times \{0\}$ and $D \times \{1\}$ and that the resulting manifold $M$ is orientable, 
where $D$ is a disk in $\Sigma$. 
See Figure \ref{fig:counter_example}. 
\begin{center}
\begin{overpic}[width=6cm,clip]{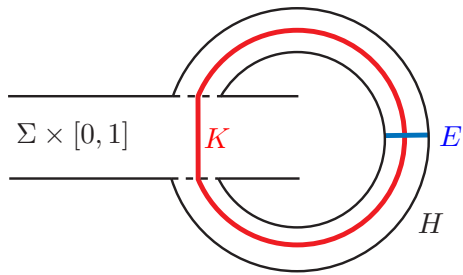}
  \linethickness{3pt}
  \put(3,49){$\Sigma \times [0,1]$}
  \put(155,15){$H$}
  \put(74,47){\color{red} $K$}
  \put(163,48){\color{blue} $E$}
\end{overpic}
\captionof{figure}{The manifold $M$.}
\label{fig:counter_example}
\end{center}
Clearly, $M$ is compact, orientable and irreducible. 
Let $K \subset M$ be the knot obtained by extending the core of $H$ 
along a vertical arc $\{ * \} \times [0,1]$ in $\Sigma \times [0 , 1]$. 
We fix a base point $v$ in $K$ and an orientation of $K$. 
Then the fundamental group $\pi_1 (M, v)$ can be naturally identified with 
$\pi_1 (\Sigma) * \Integer$, and under this identification  
$[K]$ is contained in the factor $\Integer$. 
This implies that $K$ does not bind $\pi_1 (M)$. 
On the contrary, 
it is easy to see that 
the co-core $E$ of the 1-handle $H$ 
is the unique compression disk for $\partial M$ up to isotopy. 
The algebraic intersection number of $K$ and $E$ is $\pm 1$ after giving an orientation of 
$E$. 
This implies that after deforming $K$ by any homotopy in $M$, 
$K$ intersects $E$, whence $K$ fills up $M$. 
We note that $M$ does not satisfy SBKC. 

\begin{lemma}
\label{lem:existence of a knot filling up a handlebody}
Let $V$ be a handlebody. 
Then there exists a knot in the interior of $V$ that fills up $V$. 
\end{lemma}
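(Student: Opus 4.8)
By Lemma~\ref{lem:filling and binding}, since a handlebody $V$ is compact and
irreducible and satisfies the SBKC (as noted in the paragraph preceding
Lemma~\ref{lem:Theorem of Jaco}), it suffices to exhibit an oriented knot $K$
in the interior of $V$ that binds $\pi_1(V)$. The plan is therefore to reduce
the problem to a purely group-theoretic one: find a word $w$ on the standard
basis $X = \{x_1, \dots, x_g\}$ that binds the free group $F = \pi_1(V)$, and
then realize $w$ as the conjugacy class of an embedded knot in $\Int V$.

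First I would produce the binding element. For $g=1$ any non-trivial element
binds $\Integer = F$, so $w = x_1$ works. For $g \geq 2$ I would use the
criterion recalled in Section~\ref{sec:Knots filling up a handlebody}: an
element of a free group binds $F$ precisely when it is not contained in a
proper free factor. A clean choice is a word whose Whitehead graph is
connected and has no cut vertex, for instance a product that runs through every
generator and fails to be primitive or a proper power, such as
$w = x_1 x_2 \cdots x_g \, x_1 x_2^{\,2} \cdots x_g^{\,g}$ (or any word verified
to bind via Stallings' algorithm, cf.
Section~\ref{sec:Concluding remarks}~(\ref{remark:Stallings})). I would check
binding directly: were $w$ to lie in a proper free factor $G_1$ of a splitting
$F = G_1 * G_2$, every generator $x_i$ appearing in $w$ would have to interact
with $G_1$ in a way the Whitehead-graph connectivity forbids.

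Second, I would realize the chosen conjugacy class $c(w)$ by an
\emph{embedded} knot. This is where a little care is needed, since a word only
naturally gives an immersed loop representing a free-homotopy class, not an
embedding. I would take the $1$-vertex spine $\Gamma_0$ of $V$ with edges
$e_1^0, \dots, e_g^0$ and push a loop that reads off the word $w$ (traversing
$e_i^0$ once for each occurrence of $x_i^{\pm 1}$) into the interior of $V$;
because $V$ is a $3$-dimensional handlebody, any loop in $\Int V$ can be made
embedded by general position, perturbing double points apart in the extra
dimension, while preserving its free-homotopy class. The resulting embedded
knot $K$ satisfies $c_{\pi_1(V)}(K) = c(w)$, hence binds $\pi_1(V)$.

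The main obstacle, and the step deserving the most attention, is verifying that
the chosen word genuinely binds $F$ for all $g \geq 2$; the embedding step is
routine general position, and the passage from binding to filling is immediate
from Lemma~\ref{lem:filling and binding} once the SBKC for handlebodies is
invoked. I would close the argument by exhibiting an explicit family of binding
words together with the connectivity property of their Whitehead graphs, so
that the existence claim holds uniformly in the genus $g$.
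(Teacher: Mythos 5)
Your overall strategy---reduce to producing a knot that binds $\pi_1(V)$ and then invoke Lemma \ref{lem:filling and binding} (note that the direction you need does not actually require the SBKC)---is sound, and the general-position step realizing a conjugacy class by an embedded knot in $\Int V$ is routine. The gap is in the one step you yourself flag as the crux: your explicit word does not bind. For $g=2$ your candidate is $w = x_1x_2x_1x_2^{2}$, which is \emph{primitive}: the pair $\{x_1x_2,\; x_1x_2^{2}\}$ is a free basis of $F_2$ (since $(x_1x_2)^{-1}(x_1x_2^{2}) = x_2$ and hence also $x_1$ lie in the subgroup it generates), and $w$ is the product of these two basis elements, hence itself a member of a basis. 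So $w$ generates a proper cyclic free factor and does not bind $F_2$; correspondingly its Whitehead graph has a cut vertex at $x_2^{-1}$, contrary to your connectivity claim. The paper's own discussion in Section \ref{sec:Knots filling up a handlebody} records that $x_1x_2x_1x_2$ does not bind while $x_1x_2x_1x_2^{3}$ does; your word is the intermediate case and falls on the wrong side. The argument is repaired by choosing instead, say, $w = x_1^{2}x_2^{2}\cdots x_g^{2}$, whose Whitehead graph is a $2g$-cycle (connected, no cut vertex), so Stallings' criterion from Section \ref{sec:Concluding remarks} (\ref{remark:Stallings}) applies; but as written the certification of binding fails, and the vague appeal to ``Whitehead-graph connectivity'' would not have caught the error.

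For comparison, the paper sidesteps the word combinatorics entirely: it takes a simple closed curve $K$ on $\partial V$ such that $\partial V \setminus K$ is incompressible in $V$ (such curves exist by Wu \cite{Wu96}), concludes that $K$ binds $\pi_1(V)$ by Lemma \ref{lem:generalization of Corollary 1 of Lyon}, pushes $K$ into the interior, and applies Lemma \ref{lem:filling and binding}. Your route is viable and somewhat more explicit once a correct binding word is exhibited and verified, but that verification is exactly the missing content.
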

\begin{proof}
Let $K$ be a simple closed curve in $\partial V$ such that $\partial V \setminus K$ is incompressible in $V$. 
Such a simple closed curve does exist. 
In fact, a simple closed curve shown in Figure \ref{fig:filling_knot} 
satisfies this condition (see for instance Wu \cite[Section 1]{Wu96}). 
\begin{center}
\begin{overpic}[width=4cm,clip]{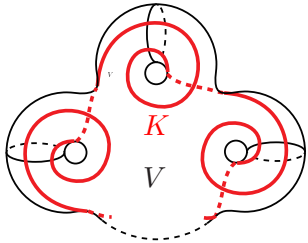}
  \linethickness{3pt}
  \put(52,20){$V$}
  \put(52,40){\color{red} $K$}
\end{overpic}
\captionof{figure}{The surface $\partial V \setminus K$ is incompressible in $V$.}
\label{fig:filling_knot}
\end{center}
Then by Lemma \ref{lem:generalization of Corollary 1 of Lyon} $K$ binds $\pi_1 (V)$. 
It follows from Lemma \ref{lem:filling and binding} that 
a knot obtained by moving $K$ by an isotopy to lie in the interior of $V$ fills up $V$. 
\end{proof}

\section{Knots filling up a 3-subspace of the 3-sphere}
\label{sec:Knots filling up a 3-subspace of the 3-sphere}

Let $V$ be a handlebody. 
A (possibly disconnected) subgraph of a spine of $V$ is called a {\it subspine} if it does not contain 
a contractible component. 
A {\it compression body} $W$ is the complement of an open regular neighborhood of a (possibly empty) subspine $\Gamma$ of a handlebody $V$. 
The component $\partial_+ W = \partial V$ is called 
the {\it exterior boundary} of
$W$, and $\partial_- W = \partial W \setminus \partial_+ W = \partial \Nbd (\Gamma)$ is called 
the {\it interior boundary} of $W$. 
We remark that the interior boundary is incompressible in $W$, see Bonahon \cite{Bon83}.

For a compression body $W$, a {\it spine} is defined to be 
a graph $\Gamma$ embedded in $W$ so that 
\begin{enumerate}
\item
$\Gamma \cap \partial W = \Gamma \cap \partial_- W$ 
consists only of vertices of valence one; and 
\item
$W$ collapses onto $\Gamma \cup \partial_- W$. 
\end{enumerate}
We note that this is a generalization of a spine of a handlebody. 
We also note that if $V$ is a handlebody and $\Gamma$ is a subspine of $\hat{\Gamma}$ of $V$ 
such that $W \cong V \setminus \Int \Nbd (\Gamma; V)$, then 
$\hat{\Gamma} \setminus \Int \Nbd (\Gamma; V)$ is a spine of $W$.  
As a generalization of the case of handlebodies, 
a {\it $1$-vertex spine} of a compression body $W$ is defined to be 
a (possibly empty) connected spine $\Gamma$ such that 
\begin{enumerate}
\item
$\Gamma$ is homeomorphic to the empty set, an interval, a circle, 
or a graph with a single vertex of valence at least $3$; 
\item
$\Gamma$ intersects each component of $\partial_-W$ in a single univalent vertex; and 
\item
$\Gamma$ has no univalent vertices in the interior of $W$. 
\end{enumerate}
If $\Gamma$ is an interval or a circle, we regard that $\Gamma$ 
contains a unique vertex of valence $2$. 
For a $1$-vertex spine of a compression body $W$, 
The spines shown in 
Figure \ref{fig:1-vertex_spine} (i)-(iii) are 1-vertex spines while 
the one shown in Figure \ref{fig:1-vertex_spine} (iv) is not so 
because it has a univalent vertex in the 
interior of $W$. 
\begin{center}
\begin{overpic}[width=14cm,clip]{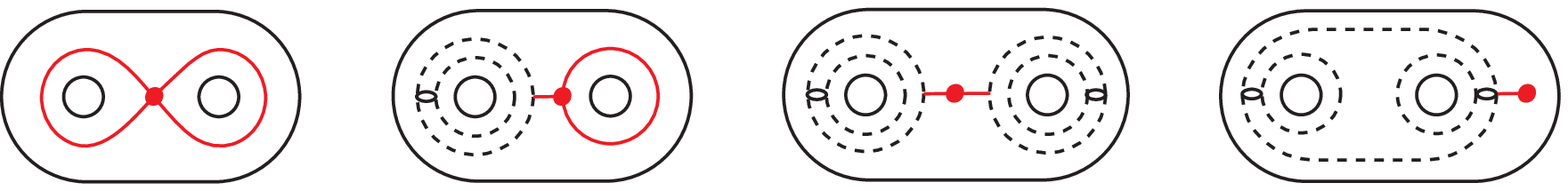}
  \linethickness{3pt}
  \put(35,0){(i)}
  \put(130,0){(ii)}
  \put(235,0){(iii)}
  \put(344,0){(iv)}
\end{overpic}
\captionof{figure}{}
\label{fig:1-vertex_spine}
\end{center}
we call a vertex of valence at least 2 the {\it interior vertex}. 
We note that every $1$-vertex spine has a unique interior vertex. 
This is the reason why it is named so. 

Let $W$ be a compression body. 
Suppose that $\partial_- W$ consists of 
$n$ closed surfaces $\Sigma_1$, $\Sigma_2 , \ldots , \Sigma_n$.  
A (possibly empty) set $\mathcal{D} = \{ D_1$, $D_2, \ldots, D_m$, $E_{\Sigma_1}$, $E_{\Sigma_2}, \ldots , E_{\Sigma_n} \}$ 
of pairwise disjoint compression disks for $\partial_+ W$
is called a {\it cut-system} for $W$ if 
\begin{enumerate}
\item
each disk $E_{\Sigma_i}$ separates from $W$ a component that is 
homeomorphic to $\Sigma_i \times [0,1]$ and contains $\Sigma_i$; 
\item
$W$ cut off by $E_{\Sigma_1} \cup E_{\Sigma_2} \cup \cdots \cup E_{\Sigma_n}$ 
has at most one handlebody component $V$; and 
\item
$ D_1 \cup D_2 \cup \cdots \cup D_m$ cuts off $V$ into a single 3-ball. See Figure \ref{fig:cut-system}.
\end{enumerate}
\begin{center}
\begin{overpic}[width=7cm, clip]{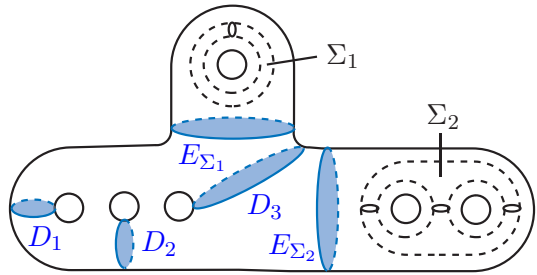}
  \linethickness{3pt}
  \put(64,41){\color{blue} $E_{\Sigma_1}$}
  \put(98,7){\color{blue} $E_{\Sigma_2}$}
  \put(7,10){\color{blue} $D_1$}
  \put(50,8){\color{blue} $D_2$}
  \put(90,22){\color{blue} $D_3$}
  \put(120,79){$\Sigma_1$}
  \put(158,56){$\Sigma_2$}
\end{overpic}
\captionof{figure}{A cut system.}
\label{fig:cut-system}
\end{center}
We note that if $W = \Sigma \times [0,1]$, where $\Sigma$ is a closed orientable surface, then $m=n=0$. 
If $W$ is a handlebody, then $n=0$ and $m$ is its genus. 

By virtue of the Poincar\'{e}-Lefschez duality, 
we have a one-to-one correspondence between the $1$-vertex spines 
and cut-systems of a compression body $W$ modulo isotopy. 
The correspondence can be described as follows. 
The 1-vertex spine $\Gamma$ {\it dual to} 
a given cut-system $\mathcal{D}$ for a compression body $W$ 
is obtained by regarding a regular neighborhood 
of each disk $D$ in $\mathcal{D}$ as a 1-handle with 
$D$ as the cocore, and then extending 
the core arcs of the 1-handles in each component $W_0$ of the exterior of 
the union of the disks in $\mathcal{D}$ 
in such a way that 
\begin{enumerate}
\item
if $W_0$ is a 3-ball, then the extension is given by radial arcs; and 
\item
if $W_0$ is the product of a closed surface with an interval, then the extension is given by an vertical arc. 
\end{enumerate}
By conversing the construction, we get the cut-system 
{\it dual to} a $1$-vertex spine of $W$. 
\begin{center}
\begin{overpic}[width=10cm, clip]{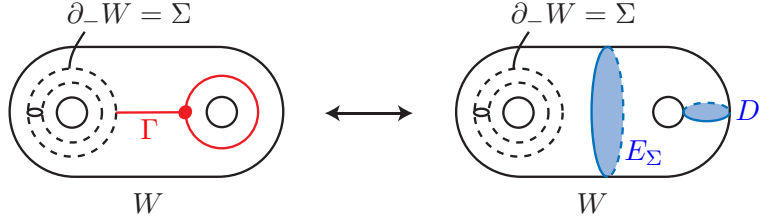}
  \linethickness{3pt}
  \put(22,72){$\partial_- W = \Sigma$}
  \put(190,72){$\partial_- W = \Sigma$}
  \put(47,0){$W$}
  \put(215,0){$W$}
  \put(50,28){\color{red} $\Gamma$}
  \put(275,35){\color{blue} $D$}
  \put(233,20){\color{blue} $E_{\Sigma}$}
\end{overpic}
\captionof{figure}{The Poincar\'{e}-Lefschez duality.}
\label{fig:poincare_duality}
\end{center}

Let $V$ be a handlebody of genus $g$ and $\Gamma$ be a subspine of $V$. 
Assume that each component of $\Gamma$ is a rose.  
A {\it cut-system} for the pair $(V, \Gamma)$ is 
a cut-system for $V$ dual to a spine $\hat{\Gamma}$, where $\hat{\Gamma}$ is obtained by 
contracting a maximal subtree of a spine of $V$ containing $\Gamma'$ as a subgraph. 
See Figure \ref{fig:handlebody_with_a_subspine}. 
\begin{center}
\begin{overpic}[width=10cm,clip]{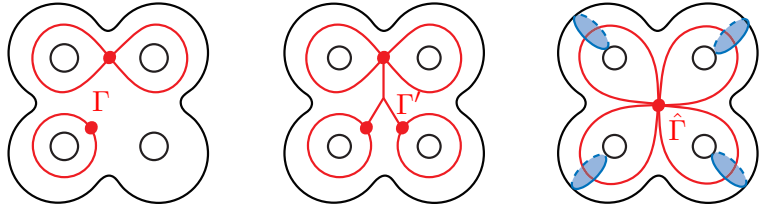}
  \linethickness{3pt}
  \put(32,35){\color{red} $\Gamma$}
  \put(147,34){\color{red} $\Gamma'$}
  \put(250,24){\color{red} $\hat{\Gamma}$}
\end{overpic}
\captionof{figure}{A cut-system for $(V, \Gamma)$ is 
a cut-system for $V$ dual to a spine $\hat{\Gamma}$.}
\label{fig:handlebody_with_a_subspine}
\end{center}

\begin{lemma}
\label{lem:cut-system disjoint from a compression disk}
Let $W$ be a compression body. 
Let $D$ be a compression disk for $\partial_+ W$. 
Then there exists a cut-system for $W$ disjoint from $D$. 
\end{lemma}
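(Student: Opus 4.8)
The plan is to reduce the claim to the mere existence of cut-systems for simpler compression bodies by cutting $W$ along $D$, and then to reassemble a cut-system for $W$ that has been kept away from $D$ throughout. First I would record the structural fact that cutting the compression body $W$ along the compression disk $D$ for $\partial_+ W$ produces a manifold $W_D = W \setminus \Int \thinspace \Nbd(D)$ that is again a compression body, or a disjoint union of two compression bodies; here $\partial_- W_D = \partial_- W$ (these surfaces are untouched and remain incompressible by Bonahon \cite{Bon83}), while $\partial_+ W_D$ is the surface obtained by compressing $\partial_+ W$ along $D$. Since each resulting compression body has exterior boundary of strictly smaller genus, I would also note that every compression body admits a cut-system, for instance as the system dual to a $1$-vertex spine under the Poincar\'{e}--Lefschez correspondence described above (alternatively this follows by induction on the genus of $\partial_+$).

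The argument then splits according to whether $D$ is separating. Suppose first that $D$ is non-separating, so that $W_D$ is connected and $W$ is recovered from $W_D$ by attaching a single $1$-handle $h$ along the two scar disks of $D$, with $D$ isotopic to the cocore of $h$. Choosing any cut-system $\mathcal{C}$ for $W_D$ and letting $D'$ be a parallel push-off of $D$ (a second cocore of $h$), I would show that $\mathcal{C} \cup \{D'\}$ is a cut-system for $W$: cutting $W$ along $D'$ returns $W_D$, after which cutting along $\mathcal{C}$ yields one $3$-ball together with the product pieces, exactly as required by conditions (1)--(3); and $\mathcal{C} \cup \{D'\}$ is disjoint from $D$ because $\mathcal{C}$ lies in the interior of $W_D$ and $D'$ is parallel to $D$.

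Suppose instead that $D$ is separating, so $W_D = W_1 \sqcup W_2$ with the surfaces of $\partial_- W$ distributed between the two pieces; equivalently $W$ is the boundary connected sum of $W_1$ and $W_2$ along the disk $D \subset \partial_+ W$. Taking cut-systems $\mathcal{C}_1, \mathcal{C}_2$ for $W_1, W_2$, I would verify that $\mathcal{C}_1 \cup \mathcal{C}_2$ is a cut-system for $W$ disjoint from $D$: the $E_\Sigma$-disks still split off their respective products, and since $D$ continues to glue the two handlebody parts of $W_1$ and $W_2$ into a single handlebody, the remaining disks of $\mathcal{C}_1 \cup \mathcal{C}_2$ cut this handlebody into a single $3$-ball, so that conditions (2) and (3) hold with one handlebody component and one ball rather than two. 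Disjointness is again immediate since each $\mathcal{C}_i$ lies in the interior of $W_i$.

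The step I expect to require the most care is the bookkeeping of the cut-system conditions in the degenerate situations, in particular when one side of $D$ is a trivial product $\Sigma_i \times [0,1]$, so that the corresponding $\mathcal{C}_i$ is empty and $\Sigma_i$ would otherwise fail to be split off. In that case $D$ is isotopic to the separating disk $E_{\Sigma_i}$, and I would repair the system by adjoining a parallel push-off of $D$ to play the role of $E_{\Sigma_i}$, which keeps the system disjoint from $D$. Checking that in every case the resulting collection satisfies conditions (1)--(3) of a cut-system, especially the ``at most one handlebody component'' clause after the separating sum, is the main obstacle, whereas the disjointness from $D$ is automatic from the construction.
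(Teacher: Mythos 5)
Your route is genuinely different from the paper's. The paper does not cut $W$ along $D$ at all: it identifies $W$ with $V \setminus \Int \, \Nbd(\Gamma)$ for a handlebody $V$ and subspine $\Gamma$, chooses a cut-system for the pair $(V,\Gamma)$ minimizing its intersection with $D$, removes the intersection by an outermost-subdisk surgery argument (checking that the surgered system still respects $\Gamma$), and only then produces the disks $E_{\Sigma_i}$ inside the $3$-ball obtained by cutting $V$ along the system. Your decompose-and-reassemble argument trades that minimality-plus-surgery step for a case analysis on whether $D$ separates, and in exchange gives a cleaner structural picture of how cut-systems behave under attaching a $1$-handle or forming a boundary connected sum. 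Both are standard and workable.

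Two points in your sketch need repair, both inside the ``degenerate bookkeeping'' you yourself flag as the main obstacle. First, your assertion that the $E_{\Sigma_j}$-disks of $\mathcal{C}$ (or of $\mathcal{C}_1\cup\mathcal{C}_2$) ``still split off their respective products'' in $W$ is not automatic: if a scar disk of $D$ lies in the exterior-boundary part of a product piece split off by some $E_{\Sigma_j}$, then in $W$ that disk splits off $\Sigma_j\times[0,1]$ with the rest of $W$ glued onto it, and condition (1) fails. You must first isotope the chosen cut-systems so that the scar disks of $D$ lie over the handlebody part of each piece of $W_D$; this is possible exactly when that piece is not a product. Second, your proposed repair for the product case only covers separating $D$. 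If $D$ is non-separating and $W_D\cong\Sigma\times[0,1]$, so that $W$ is this product with a single $1$-handle attached, then $\mathcal{C}=\emptyset$ and your candidate $\{D'\}$ contains no $E_{\Sigma}$-disk, while $D$, being non-separating, is not isotopic to any such disk; here you must construct $E_{\Sigma}$ separately, for instance as the frontier of a regular neighborhood of the $1$-handle together with an arc in $\partial_+ W_D$ joining its attaching disks. With these two fixes your argument goes through and gives a correct alternative proof.
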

\begin{proof}
We may identify $W$ with a genus $g$ handlebody $V$ 
with an open regular neighborhood of a subspine $\Gamma$ removed. 
Further, we may assume that each component of $\Gamma$ is a rose. 
Let $\Gamma_1$, $\Gamma_2 , \ldots , \Gamma_n$ be the components of $\Gamma$.  
Choose a cut-system $\{ D_1, D_2 \ldots, D_g \}$ for the pair $(V, \Gamma)$ so that 
$ | D \cap ( D_1 \cup D_2 \cup \cdots \cup D_g ) | $ is minimal among all cut-systems for  
$(V, \Gamma)$. 

Suppose for a contradiction that $ D \cap ( D_1 \cup D_2 \cup \cdots \cup D_g ) \neq \emptyset $. 
Choose an outermost subdisk $\delta$ of $D$ cut off by $D_1 \cup D_2 \cup \cdots \cup D_g$. 
We may assume that $\delta \cap D_1 \neq \emptyset$. 
Let $D_1'$ and $D_1''$ be the disks obtained from $D_1$ by surgery along $\delta$. 
Then exactly one of 
$\{ D_1', D_2 \ldots, D_g \}$ and $\{ D_1'', D_2 \ldots, D_g \}$, 
say $\{ D_1', D_2 \ldots, D_g \}$, is a cut-system for the handlebody $V$. 
We note that $D_1''$ separates the handlebody $V$ cut off by  
$D_2 \cup D_3 \cup \cdots \cup D_g$. 
If $D_1$ does not intersect $\Gamma$, then it follows that 
$\{ D_1', D_2 \ldots, D_g \}$ is a cut-system for the pair $(V, \Gamma)$ with 
$ | D \cap ( D_1' \cup D_2 \cup \cdots \cup D_g ) | < | D \cap ( D_1 \cup D_2 \cup \cdots \cup D_g ) | $. 
This contradicts the minimality of 
$ | D \cap ( D_1 \cup D_2 \cup \cdots \cup D_g ) | $. 
Suppose that $D_1$ intersects $\Gamma$. 
If $D_1''$ intersects $\Gamma$, then 
$D_1''$ can not separate the handlebody $V$ cut off by  
$D_2 \cup D_3 \cup \cdots \cup D_g$. 
This is a contradiction. 
Thus $D_1'$ intersects $\Gamma$. 
This implies that $\{ D_1', D_2 \ldots, D_g \}$ is a cut-system for the pair $(V, \Gamma)$.  
This contradicts again, the minimality of 
$ | D \cap ( D_1 \cup D_2 \cup \cdots \cup D_g ) | $. 
Therefore we have 
$ D \cap ( D_1 \cup D_2 \cup \cdots \cup D_g )  = \emptyset$ 
and $D \cap \Gamma = \emptyset$. 

From now on, we assume that 
each of $D_1$, $D_2 , \ldots , D_m$ does not intersect $\Gamma$ 
while 
each of $D_{m+1}$, $D_{m+2} , \ldots , D_g$ does so. 
Let $B$ the 3-ball obtained by cutting $V$ along  
$D_1 \cup D_2 \cup \cdots \cup D_g$. 
Then $B \cap \Gamma_i$ is a cone on an even number of points. 
We note that $D$ is a separating disk in $B$ disjoint from the cones $B \cap \Gamma$. 
For each $i \in \{ 1, 2, \ldots, m \}$ let $D_i^{\pm}$ be disks on the boundary of $B$ coming from $D_i$. 
Then there exists a set $\{E_{\Sigma_1}$, $E_{\Sigma_2} , \ldots, E_{\Sigma_n} \}$ of mutually disjoint disks 
properly embedded in $B$ such that 
\begin{enumerate}
\item
$E_{\Sigma_1} \cup E_{\Sigma_2} \cup \cdots \cup E_{\Sigma_n}$ is disjoint from 
$\Gamma \cup D \cup D_1^{\pm} \cup D_2^{\pm} \cup \cdots \cup D_g^{\pm}$; and 
\item
$E_{\Sigma_i}$ separates from $B$ a 3-ball $B_i$ such that 
$B_i \cap \Gamma = B \cap \Gamma_i$ and 
$B_i \cap (D_1^\pm \cup D_2^\pm \cup \cdots \cup D_m^\pm) = \emptyset$. 
\end{enumerate}
Now the set $\{ D_1$, $D_2, \ldots, D_m$, $E_{\Sigma_1}$, $E_{\Sigma_2}, \ldots , E_{\Sigma_n} \}$ 
is a required cut-system for $W$. 
\end{proof}

Let $M$ be an irreducible, compact, connected, orientable 3-manifold with connected boundary. 
Following Bonahon \cite{Bon83}, a {\it characteristic compression body} $W$ of $M$ is 
defined to be a compression body embedded in $M$ so that 
\begin{enumerate}
\item
$\partial_+ W = \partial M$; and 
\item
The closure of $M \setminus W$ is boundary-irreducible. 
\end{enumerate}
We remark that, for a given characteristic compression body $W$ of $M$, 
by the irreducibility of $M$, every compression disk for $\partial M$ 
can be moved by an isotopy to lie in $W$. 
\begin{theorem}[Bonahon \cite{Bon83}]
An irreducible, compact, connected, orientable $3$-manifold with connected boundary
has a unique $($up to isotopy$)$ characteristic compression body. 
\end{theorem}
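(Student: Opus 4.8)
The plan is to establish existence by a maximal-compression construction and uniqueness by a disjointness-and-product-region argument, leaning on the cut-system machinery developed above.

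For existence, I would start from a collar $\partial M \times [0,1]$ of the boundary and successively attach $2$-handles to its inner boundary along a maximal collection of pairwise disjoint, non-parallel compression disks for $\partial M$, capping off any resulting $2$-sphere boundary component with a $3$-ball (this is where irreducibility of $M$ is used). Since $M$ is compact and irreducible, a Haken-type finiteness argument shows this process terminates after finitely many steps, yielding a submanifold $W$ with $\partial_+ W = \partial M$. By construction $W$ is assembled from $\partial M \times [0,1]$ by attaching $2$- and $3$-handles, so it is a compression body; equivalently, it is dual, in the sense of the Poincar\'{e}--Lefschetz duality described above, to the $1$-vertex spine obtained from its cut-system. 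Maximality of the disk collection guarantees that no compression disk for $\partial M$ survives in $\overline{M \setminus W}$, so the complement is boundary-irreducible, and hence $W$ is a characteristic compression body.

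For uniqueness, the key preliminary observation is that the interior boundary $\partial_- W$ of a characteristic compression body $W$ is incompressible in all of $M$: it is incompressible in $W$ by Bonahon \cite{Bon83} (as recorded above) and incompressible in $\overline{M \setminus W}$ by condition (2) of the definition, and a two-sided surface that is incompressible from both sides in an irreducible manifold is incompressible. Now let $W$ and $W'$ be two characteristic compression bodies. I would first isotope $\partial_- W$ and $\partial_- W'$ into general position and minimize the number of intersection curves; using the incompressibility of both surfaces together with the irreducibility of $M$, standard innermost-disk surgeries remove every intersection circle, so that after isotopy the two frontiers are disjoint. Next I would pick cut-systems $\mathcal{D}$ for $W$ and $\mathcal{D}'$ for $W'$; their disks are compression disks for $\partial M = \partial_+ W = \partial_+ W'$, and by the remark following the definition (irreducibility of $M$ together with boundary-irreducibility of the relevant complement) each disk of $\mathcal{D}'$ can be isotoped into $W$ and each disk of $\mathcal{D}$ into $W'$, with Lemma \ref{lem:cut-system disjoint from a compression disk} used to keep these cut-systems disjoint from the already-disjoint frontiers.

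The main obstacle is the final step: upgrading this picture to an honest isotopy $W \simeq W'$. Here I would analyze each component $R$ of $M$ cut along $\partial_- W \cup \partial_- W'$ that lies between the two frontiers, so that $\partial R$ consists of a piece of $\partial_- W$ and a piece of $\partial_- W'$. Using that both frontier surfaces are incompressible, that $M$ is irreducible, and that neither compression body admits a further compression disk for $\partial M$ in its complement, I would show that $R$ carries no essential compressing data, whence $R$ is a product $\partial_- W \times [0,1]$. Such product regions between the two frontiers then furnish an ambient isotopy of $M$ carrying $\partial_- W$ to $\partial_- W'$, and since $\partial_+ W = \partial_+ W' = \partial M$ this carries $W$ to $W'$, completing the proof. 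Controlling this product structure --- ruling out hidden handles in $R$ and organizing the isotopies coherently with the cut-systems --- is the delicate part of the argument.
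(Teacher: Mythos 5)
First, note that the paper itself does not prove this statement: it is quoted directly from Bonahon \cite{Bon83}, so there is no in-paper argument to compare yours against. Judged on its own, your sketch has the right skeleton (maximal compression for existence; incompressibility of the frontier plus a parallelism argument for uniqueness), but both halves have genuine gaps at exactly the points where the content of the theorem lies.

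For existence, the inference ``no compression disk for $\partial M$ survives in $\overline{M\setminus W}$, so the complement is boundary-irreducible'' conflates two different surfaces: boundary-irreducibility of $\overline{M\setminus W}$ means the absence of compressing disks for $\partial_- W$, and such a disk need not arise from a compressing disk of $\partial M$ disjoint from your chosen maximal collection. The standard repair is to iterate on the compression body itself --- whenever $\partial_- W_i$ compresses in the complement, attach a $2$-handle along that disk and cap off any resulting spheres --- with termination forced by the strictly decreasing complexity (total genus) of the interior boundary; a one-shot maximal family of disjoint disks for $\partial M$ does not suffice. For uniqueness, the claim that ``standard innermost-disk surgeries remove every intersection circle'' between $\partial_- W$ and $\partial_- W'$ is false as stated: innermost-disk arguments only remove curves that are inessential on at least one of the two surfaces, and two incompressible surfaces in an irreducible $3$-manifold can very well intersect essentially (two incompressible tori, for instance). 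Making the two frontiers disjoint requires using the compression-body structure itself --- for example, that a closed incompressible surface contained in a compression body is parallel into its interior boundary --- and the subsequent step that each region $R$ between the frontiers is a product, which you explicitly defer, is the heart of Bonahon's argument rather than a routine verification. So the proposal is a reasonable outline but not yet a proof; to complete it you should either follow \cite{Bon83} or supply the two missing lemmas (parallelism of closed incompressible surfaces in compression bodies, and the product-region recognition between the two frontiers).
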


\begin{lemma}
\label{lem:knot filling a subspace}
Let $M$ be a compact, connected, orientable $3$-manifold with connected boundary. 
Let $W$ be a compression body in $M$ such that $\partial M = \partial_+ W$. 
Let $K$ be a knot in the interior of $W$. 
If $K$ fills up $M$, then $K$ fills up $W$. 
Further, when $M$ is irreducible and $W$ is the characteristic compression body, then 
$K$ fills up $M$ if and only if $K$ fills up $W$. 
\end{lemma}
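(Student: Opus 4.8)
The plan is to prove the two assertions separately: the first by a direct cut-and-paste argument with knot exteriors, and the second by passing to the group-theoretic notion of binding and invoking Lemma~\ref{lem:filling and binding}.

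\emph{First assertion.} Write $X = \overline{M \setminus W}$; since $\partial_+ W = \partial M$ we have $\partial X = \partial_- W$ and $M = W \cup_{\partial_- W} X$. Let $K'$ be a knot in $\Int W$ with $K' \overset{W}{\sim} K$; then also $K' \overset{M}{\sim} K$, so by hypothesis $M \setminus \Int \Nbd(K')$ is irreducible and boundary-irreducible. First I would record that $K$ is not null-homotopic in $M$: a null-homotopic $K$ is homotopic to a trivial knot $K'$ lying in a ball, and the disk bounded by $K'$ yields, after removing $\Nbd(K')$, a compression disk with longitudinal boundary for $\partial\Nbd(K')$, so $M \setminus \Int\Nbd(K')$ would be boundary-reducible, contradicting that $K$ fills up $M$. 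Hence $K$, and therefore $K'$, lies in no ball of $W$. Since a compression body is irreducible, the standard argument (a $2$-sphere in $W \setminus \Int\Nbd(K')$ bounds a ball in $W$, and that ball cannot contain the connected set $\Nbd(K')$ as $K'$ lies in no ball) shows $W \setminus \Int\Nbd(K')$ is irreducible. For boundary-irreducibility I would treat the three boundary pieces $\partial_+ W$, $\partial_- W$, $\partial\Nbd(K')$ separately: $\partial_- W$ is incompressible already in $W$ (Bonahon), hence in the subset $W \setminus \Int\Nbd(K')$; and any compression disk for $\partial_+ W$ or for $\partial\Nbd(K')$ inside $W \setminus \Int\Nbd(K')$ would, regarded in $M \setminus \Int\Nbd(K')$, remain a compression disk for $\partial M$, respectively $\partial\Nbd(K')$, contradicting boundary-irreducibility of $M \setminus \Int\Nbd(K')$. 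Thus $K$ fills up $W$.

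\emph{Second assertion.} The forward implication is the first assertion, so only the converse remains: assuming $M$ irreducible and $W$ the characteristic compression body, I would show that $K$ filling up $W$ implies $K$ fills up $M$, routing this through binding by the chain $K$ fills up $W$ $\Rightarrow$ $K$ binds $\pi_1(W)$ $\Rightarrow$ $K$ binds $\pi_1(M)$ $\Rightarrow$ $K$ fills up $M$. The last implication is the first half of Lemma~\ref{lem:filling and binding}, available since $M$ is irreducible. The first implication is the converse half of Lemma~\ref{lem:filling and binding}, which applies once $W$ is known to satisfy the SBKC; I would verify the SBKC for the compression body $W$ by the same spine/cut-system argument that gives it for handlebodies, using the correspondence between $1$-vertex spines and cut-systems (Lemmas~\ref{lem:spine corresponding to a given basis} and~\ref{lem:cut-system disjoint from a compression disk}) and sending each freely indecomposable factor $\pi_1(\Sigma_j)$ carried by a component of $\partial_- W$ to whichever side of the splitting contains it.

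The crux, and the step I expect to be hardest, is the middle implication $K$ binds $\pi_1(W) \Rightarrow K$ binds $\pi_1(M)$, which I would prove in contrapositive form. Because $W$ is characteristic, $X$ is boundary-irreducible and $\partial_- W$ is incompressible from both sides (in $W$ by Bonahon, in $X$ by the defining property of the characteristic compression body). Hence $M = W \cup_{\partial_- W} X$ realizes $\pi_1(M)$ as the fundamental group of a graph of groups whose edge groups are the closed surface groups $\pi_1(\Sigma_j)$ and whose vertex groups are $\pi_1(W)$ and the groups $\pi_1(X_i)$ of the components of $X$. Every edge group is freely indecomposable (a closed orientable surface group of positive genus is one-ended), and every vertex group other than $\pi_1(W)$ is freely indecomposable too, since each $X_i$ is irreducible and boundary-irreducible, so a free splitting would force an essential sphere or disk. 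Consequently, with respect to any free splitting $\pi_1(M) = G_1 * G_2$, each edge group and each $\pi_1(X_i)$ is elliptic; as the only freely decomposable vertex group is $\pi_1(W)$, such a splitting must be induced by a free splitting $\pi_1(W) = H_1 * H_2$ keeping the incident edge groups elliptic. Given $G_2 \neq 1$ and $[K] \in G_1$, the induced splitting then has $H_2 \neq 1$ (otherwise every edge group, hence every $X_i$, would lie on the $H_1$-side, forcing $G_2 = 1$) and $[K]$ conjugate into the proper free factor $H_1$, so $K$ does not bind $\pi_1(W)$. The main obstacle is precisely this inheritance statement, namely ruling out that the second free factor of $\pi_1(M)$ is \emph{hidden} in $X$; here the one-endedness of the surface edge groups is essential, since in a free product $G_1 \cap gG_2g^{-1} = 1$ forces each nontrivial surface group into a single factor. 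Together with the verification of the SBKC for $W$ noted above, this completes the argument.
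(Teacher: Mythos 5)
Your argument is correct in outline, but for the harder (converse) direction it takes a genuinely different route from the paper. The paper does not pass through binding or the SBKC at all: it takes the compression disk $D$ for $\partial M$ in $M\setminus K'$, pushes it into the characteristic compression body, completes it to a cut-system, and then works combinatorially with words on the dual generating sets $G\subset\hat G$, using the retraction $\varphi:\pi_1(M)\to\pi_1(W)$ that kills the $1$-handle generators of the exterior pieces to transport the word for $K'$ (which omits the letter dual to $D$) down to $\pi_1(W)$, contradicting ``$K$ fills up $W$'' directly. Your route instead factors the converse as (fills up $W$) $\Rightarrow$ (binds $\pi_1(W)$) $\Rightarrow$ (binds $\pi_1(M)$) $\Rightarrow$ (fills up $M$). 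The middle implication, via the Bass--Serre tree of $G_1*G_2$ and the Kurosh decomposition of $\pi_1(W)$ (all edge groups and all $\pi_1(X_i)$ elliptic because they are nontrivial, not infinite cyclic, and freely indecomposable), is sound; your phrase ``induced by a free splitting of $\pi_1(W)$'' is looser than what you actually need, but the precise conclusion --- that $[K]$ lies in a $\pi_1(W)$-conjugate of the Kurosh factor $\pi_1(W)\cap gG_1g^{-1}$, which is a proper free factor unless $\pi_1(W)$ is elliptic, in which case stable letters and all $\pi_1(X_i)$ are forced to fix the same vertex and $G_2=1$ --- does follow. Your approach is more conceptual and reuses Lemma \ref{lem:filling and binding}; the paper's is more hands-on and avoids any group-theoretic input beyond normal forms in free products.

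The one step you should not treat as routine is the first implication, which requires the SBKC for a compression body $W$. The paper only proves the SBKC for handlebodies, via Lemma \ref{lem:spine corresponding to a given basis} and Nielsen's theorem that $\Aut(F_g)$ is generated by elementary Nielsen transformations, each realized by a homeomorphism of the handlebody. For $\pi_1(W)\cong F_m*\pi_1(\Sigma_1)*\cdots*\pi_1(\Sigma_n)$ you must first invoke Grushko--Kurosh uniqueness to put an arbitrary splitting $G_1*G_2$ into the form $\bigl(F_{m_1}*\ast_{j\in J_1}g_j\pi_1(\Sigma_j)g_j^{-1}\bigr)*\bigl(F_{m_2}*\ast_{j\in J_2}g_j\pi_1(\Sigma_j)g_j^{-1}\bigr)$, and then realize the corresponding automorphism (which preserves the peripheral conjugacy classes) by a homeomorphism of $W$; this needs the Fouxe--Rabinovitch generators of the relevant automorphism group (partial conjugations and transvections on the surface factors, not just Nielsen moves on the free part), each realized by a handle slide. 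This is true and in the spirit of the paper's cut-system machinery, but it is a genuine extension that your proposal only gestures at, and it is precisely the work the paper's word-theoretic argument is designed to avoid. The first assertion and the remaining bookkeeping (injectivity of $\pi_1(W)\to\pi_1(M)$ from incompressibility of $\partial_-W$, irreducibility and boundary-irreducibility of the $X_i$) are handled correctly.
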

\begin{proof}

Since any knot $K'$ in the interior of $W$ with $K \overset{W}{\sim} K'$ satisfies 
$K \overset{M}{\sim} K'$, it follows immediately from the definition that 
if $K$ fills up $M$, then $K$ fills up $W$. 

Suppose $M$ is irreducible, $W$ is the characteristic compression body, and 
$K$ is a knot in $W$ that fills up $W$. 
We will show that $K$ fills up $M$. 
If $M$ is a handlebody, then we have $M = W$ and there is nothing to prove. 
Suppose that $M$ is not a handlebody. 
Then $M$ can be decomposed as 
$M = W \cup X$, where 
$W \cap X = \partial_- W = \partial X$ and 
$X$ is the union of boundary-irreducible 3-manifolds.  
The interior boundary $\partial_- W$ consists of a finite number of 
closed surfaces $\Sigma_1$, $\Sigma_2 ,\ldots, \Sigma_n$ of genus at least 1. 
Let $g_i$ be the genus of $\Sigma_i$ ($i \in \{1, 2, \ldots, n\}$). 
We recall that each $\Sigma_i$ is incompressible in $M$. 
Suppose for a contradiction that there exists a knot $K' $ in the interior of $M$ with $K \overset{M}{\sim} K'$ 
such that $\partial M$ is compressible in $M \setminus K'$. 
Let $D$ be a compression disk for $\partial M$ in $M \setminus K'$. 
We may assume that $D$ is contained in $W$. 

Suppose first that $D$ does not separate $W$. 
By Lemma \ref{lem:cut-system disjoint from a compression disk}, 
there exists a cut-system for $W$ disjoint from $D$. 
By replacing a suitable disk in the system with $D$, 
we obtain a cut-system 
$\mathcal{D} = \{ D_1$, $D_2, \ldots, D_m$, $E_{\Sigma_1}$, $E_{\Sigma_2}, \ldots , E_{\Sigma_n} \}$ 
where $D = D_1$. 
Let $\Gamma$ be the $1$-vertex spine of $W$ dual to $\mathcal{D}$. 
Fix a presentation of the fundamental group of each surface $\Sigma_i$ as:
$\pi_1 (\Sigma_i) = \langle a_{i,j}, b_{i,j} ~(j \in \{ 1, 2, \ldots, g_i \}) \mid \prod_{j=1}^{g_i} [a_{i,j}, b_{i,j}] \rangle$, 
where we take the base point at $\Gamma \cap \Sigma_i$. 

Let $v_0$ be the interior vertex of $\Gamma$. 
Let $V$ be the unique component of $W$ cut off by the union of disks in $\mathcal{D}$ that is homeomorphic to a handlebody. 
We fix a generating set $\{ x_1 , x_2 , \ldots , x_m \}$ of $\pi_1 (V, v_0)$ so that 
an element $x_i$ is defined by the loop in $\Gamma$ dual to $D_i$. 
Then by the Seifert-van Kampen Theorem, $\pi_1 (W, v_0)$ is generated 
by $x_i$'s, $a_{i, j}$'s and $b_{i,j}$'s. 
Set 
\[G = \{ {x_i}^{\pm 1} \mid i \in \{ 1 , 2 , \ldots , m \} \}  \cup 
\{ {a_{i,j}}^{\pm 1}, {b_{i,j}}^{\pm 1} ~(j \in \{ 1 , 2 , \ldots , g_i \}) \mid i \in \{ 1 , 2 , \ldots , n \} \} . \]
Let $H_1$, $H_2 , \ldots , H_l$ be 1-handles in $X$ attached to $\partial_- W$ 
so that 
the closure of $M \setminus  (W \cup H_1 \cup H_2 \cup \cdots \cup H_l)$ is the union of handlebodies. 
Let $h_1$, $h_2 , \ldots , h_l$ be the element of $\pi_1 (M, v_0)$ corresponding to the core of the 1-handles 
$H_1$, $H_2 , \ldots , H_l$, respectively. 
We set 
\[\hat{G} =  G \cup \{ {h_i}^{\pm 1} \mid i \in \{ 1 , 2 , \ldots , l \} \}  . \]
We note that the elements $\hat{G}$ generates the group $\pi_1 (M , v_0)$. 
In other words, any element of $\pi_1 (M , v_0)$ can be represented by a word on $\hat{G}$. 

Since each $\Sigma_i$ is incompressible in $M$, $\pi_1 (W, v_0)$ is a subgroup of $\pi_1 (M, v_0)$. 
Consider the conjugation class $c_{\pi_1 (W,v_0)} (K)$. 
Since $K$ fills up $W$, every word $w$ on $G$ 
representing an element of $c_{\pi_1 (W,v_0)} (K)$ contains $x_1^{\pm 1}$. 
 
By the existence of $K'$, there exists a word $w'$ on $\hat{G} \setminus \{ {x_1}^{\pm 1} \}$ 
representing an element of $c_{\pi_1 (M,v_0)} (K)$. 
Let $u$ be a word on $\hat{G}$ 
such that $u^{-1} w u$ represent the same element as $w'$ in $\pi_1 (M, v_0)$. 
Let $\varphi : \pi_1 (M , v_0) \to \pi_1 (W , v_0)$ be the epimorphism obtained by 
adding relations $h_i = 1$ for each $i \in \{ 1 , 2 , \ldots, l\}$. 
For a word $v$, we denote by $\varphi (v)$ the word on $G$ obtained from $v$ by 
replacing each ${h_i}^{\pm}$ in the word with $\emptyset$. 
Then $\varphi (u^{-1} w u) = \varphi (u)^{-1} w \varphi (u)$ represents an element contained in 
$c_{\pi_1 (W,v_0)} (K)$. 
It follows that $\varphi (w') $ is a word on $G \setminus \{ {x_1}^{\pm} \}$ representing an element of 
$c_{\pi_1 (W,v_0)} (K)$. 
This is a contradiction.

Next, suppose that $D$ separates $W$ into two components $W_1$ and $W_2$. 
By Lemma \ref{lem:cut-system disjoint from a compression disk}, 
there exists a cut-system 
$\mathcal{D} = \{ D_1$, $D_2, \ldots, D_m$, $E_{\Sigma_1}$, $E_{\Sigma_2}, \ldots , E_{\Sigma_n} \}$ 
for $W$ disjoint from $D$. 
Without loss of generality we can assume that 
the disks of $\mathcal{D}$ contained in $W_1$ 
is $\{ D_1$, $D_2, \ldots, D_{m_1}$, $E_{\Sigma_1}$, $E_{\Sigma_2}, \ldots , E_{\Sigma_{n_1}} \}$, 
where $m_1 \in \{ 1$, $2, \ldots , m \}$ and $n_1 \in \{ 0$, $1, \ldots , m \}$. 
Here we put $n_1 = 0$ if 
none of $\{ E_{\Sigma_1}$, $E_{\Sigma_2} , \ldots , E_{\Sigma_n} \}$ 
is contained in $W_1$. 

Let $\Gamma$ be the $1$-vertex spine of $W$ dual to $\mathcal{D}$. 
Using the spine $\Gamma$, fix generating sets 
\[G = \{ {x_i}^{\pm 1} \mid i \in \{ 1 , 2 , \ldots , m \} \}  \cup 
\{ {a_{i,j}}^{\pm 1}, {b_{i,j}}^{\pm 1} \mid i \in \{ 1 , 2 , \ldots , n \}, ~ j \in \{ 1 , 2 , \ldots , g_i \} \} \]
of $\pi _1 (W, v_0)$ 
and 
\[\hat{G} =  G \cup \{ {h_i}^{\pm 1} \mid i \in \{ 1 , 2 , \ldots , l \} \}  . \]
of $\pi_1 (M , v_0)$ and 
an epimorphism $\varphi : \pi_1 (M , v_0) \to \pi_1 (W , v_0)$ as in the above argument. 

If $m_1 \neq m$, then by the existence of $K'$, 
there exists a word $w'$ on $\hat{G} \setminus \{ {x_1}^{\pm 1} \}$ or  
$\hat{G} \setminus \{ {x_m}^{\pm 1} \}$ 
representing an element of $c_{\pi_1 (M,v_0)} (K)$. 
By the same argument as in the case where 
$D$ is non-separating, this is a contradiction. 
If $m_1 = m$, then $n_1 \neq n$. 
Hence by the existence of $K'$, 
there exists a word $w'$ on $\hat{G} \setminus \{ {x_1}^{\pm 1} \}$ or  
$\hat{G} \setminus \{ {a_{n,j}}^{\pm 1}, {b_{n,j}}^{\pm 1} \mid j \in \{ 1 , 2 , \ldots , g_n \}  \}$ 
representing an element of $c_{\pi_1 (M,v_0)} (K)$. 
It follows that $\varphi (w') $ is a word on $G \setminus \{ {x_1}^{\pm 1} \}$ or  
$G \setminus \{ {a_{n,j}}^{\pm 1}, {b_{n,j}}^{\pm 1} \mid j \in \{ 1 , 2 , \ldots , g_n \}  \}$ 
representing an element of 
$c_{\pi_1 (W,v_0)} (K)$. 
However, this is again a contradiction because of the fact 
that $K$ fills up $W$ implies that 
every word on $G$ 
representing an element of $c_{\pi_1 (W,v_0)} (K)$ contains both 
one of $x_1^{\pm 1}$ and one of 
$\{ {a_{n,j}}^{\pm 1}, {b_{n,j}}^{\pm 1} \mid j \in \{ 1 , 2 , \ldots , g_n \} \} $. 
This completes the proof.  
\end{proof}

\begin{theorem}
\label{thm:existence of a knot filling a submanifold}
Let $M$ be a compact, connected, proper $3$-submanifold of $S^3$ with connected boundary. 
Then there exists a knot $K$ in the interior of $M$ that fills up $M$. 
Moreover, such a knot $K$ can be taken to lie in $\Nbd (\partial M; M)$. 
\end{theorem}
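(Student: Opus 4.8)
The plan is to reduce the statement, via the characteristic compression body of $M$, to the existence of a single ``disk-busting'' curve on $\partial M$, and then to produce such a curve by comparing $M$ with an auxiliary handlebody.

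First I would check that $M$ is irreducible. Since $M$ is a proper submanifold of $S^3$ with connected boundary, any $2$-sphere $S$ in $\Int M$ bounds two $3$-balls $B_1, B_2$ in $S^3$; as $\partial M$ is connected it lies entirely in one of them, say $B_2$, so that $\Int B_1$ is disjoint from $\partial M$, and since $S = \partial B_1 \subset \Int M$ we get $B_1 \subset M$. Hence $S$ bounds a $3$-ball in $M$, and $M$ is irreducible (we tacitly exclude the only degenerate case $M \cong B^3$, i.e. we take $\partial M$ of positive genus). Being irreducible, compact, connected, orientable with connected boundary, $M$ has a unique characteristic compression body $W$ with $\partial_+ W = \partial M$ by Bonahon's theorem.

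The plan now is to find a knot in $\Int W$ lying in $\Nbd(\partial_+ W; W)$ that fills up $W$: by Lemma~\ref{lem:knot filling a subspace} (using that $M$ is irreducible and $W$ is its characteristic compression body) such a knot also fills up $M$, and it lies in $\Nbd(\partial M; M)$ because $\partial_+ W = \partial M$. Since a compression body is irreducible, Lemmas~\ref{lem:generalization of Corollary 1 of Lyon} and~\ref{lem:filling and binding} apply to $W$, so it suffices to exhibit a simple closed curve $K$ on $\partial_+ W$ with $\partial_+ W \setminus K$ incompressible in $W$. Indeed, because $\partial_- W$ is incompressible in $W$, the whole of $\partial W \setminus K$ is then incompressible, so $K$ binds $\pi_1(W)$ by Lemma~\ref{lem:generalization of Corollary 1 of Lyon}; a knot obtained by pushing $K$ slightly into $\Int W$ is homotopic to $K$, hence also binds $\pi_1(W)$, and so fills up $W$ by Lemma~\ref{lem:filling and binding}.

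The construction of $K$ is the step I expect to be the crux, and here I would exploit the definition of a compression body directly. By definition $W$ is the complement of an open regular neighborhood of a subspine $\Gamma$ of some handlebody $V$, with $\partial_+ W = \partial V$, so that $W \subset V$. As in the proof of Lemma~\ref{lem:existence of a knot filling up a handlebody} (see Wu~\cite{Wu96}) there is a simple closed curve $K$ on $\partial V$ for which $\partial V \setminus K$ is incompressible in $V$. I claim the same $K$, viewed on $\partial_+ W = \partial V$, works for $W$: any compression disk $D$ for $\partial_+ W \setminus K$ in $W$ satisfies $D \subset W \subset V$ and has $\partial D$ essential on $\partial V \setminus K$, so $D$ would be a compression disk for $\partial V \setminus K$ in $V$, contradicting its incompressibility. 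Hence $\partial_+ W \setminus K$ is incompressible in $W$, and assembling the previous paragraph gives a knot in $\Nbd(\partial M; M)$ filling up $M$. The hard combinatorial work of producing a disk-busting curve is thus avoided: the two genuine transfers are that incompressibility in the larger handlebody $V$ forces it in $W$ (immediate from $W \subset V$ and $\partial_+ W = \partial V$), and that filling up the characteristic compression body propagates to $M$, which is precisely Lemma~\ref{lem:knot filling a subspace}.
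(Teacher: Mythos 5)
Your proof is correct and follows essentially the same route as the paper: reduce to the characteristic compression body $W = V \setminus \Int\,\Nbd(\Gamma)$ of $M$, produce a disk-busting simple closed curve on $\partial V = \partial_+ W$ via Lemma \ref{lem:existence of a knot filling up a handlebody}, push it into the interior, and transfer the filling property to $M$ by Lemma \ref{lem:knot filling a subspace}. The only (harmless) divergence is the intermediate step showing the knot fills up $W$: you verify directly that $\partial W \setminus K$ is incompressible in $W$ and invoke Lemmas \ref{lem:generalization of Corollary 1 of Lyon} and \ref{lem:filling and binding}, whereas the paper applies the easy direction of Lemma \ref{lem:knot filling a subspace} to the pair $(V,W)$; your explicit verification that $M$ is irreducible (needed for Bonahon's theorem and for Lemma \ref{lem:knot filling a subspace}) is left implicit in the paper.
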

\begin{proof}
If $M$ is a handlebody, the assertion follows from Lemma \ref{lem:existence of a knot filling up a handlebody}. 
Suppose that $M$ is not a handlebody. 
Let $W$ be the characteristic compression body of $M$. 
We may identify $W$ with 
the complement of an open regular neighborhood of a 
subspine $\Gamma$ of a handlebody $V$. 
Let $K$ be a knot in the interior of $V$ that fills up $V$. 
Since $K$ can be taken not to intersect a spine of $V$ containing $\Gamma$ as a subgraph, 
We may assume that $K$ lies in a collar neighborhood of $\partial_+ W = \partial M$. 
By Lemma \ref{lem:knot filling a subspace}, $K$ fills up $W$. 
Thus, again by Lemma \ref{lem:knot filling a subspace}, $K$ fills up $M$. 
This completes the proof.  
\end{proof}

\section{Transient knots in a subspace of the 3-sphere}
\label{sec:Transient knots in a subspace of the 3-sphere}

Let $M$ be a compact, connected, proper $3$-submanifold of $S^3$. 
A knot $K$ in $M \subset S^3$ is said to be {\it transient in $M$} 
if $K$ can be deformed by a homotopy in $M$ to be the trivial knot in $S^3$.  
Otherwise, $K$ is said to be {\it persistent in $M$}. 
\begin{example}
The knot $K_1$ described on the left-hand side in Figure \ref{fig:example_transient} is transient in the handlebody $V_1$ in $S^3$   
while the knot $K_2$ described on the right-hand side is persistent in $V_2$. 
\begin{center}
\begin{overpic}[width=8cm,clip]{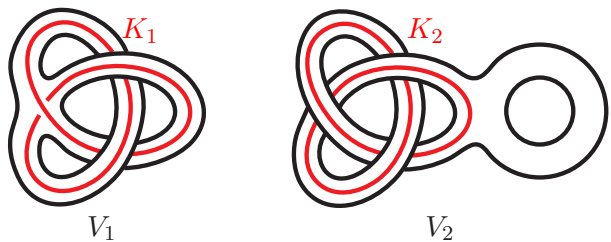}
  \linethickness{3pt}
  \put(30,0){$V_1$}
  \put(158,0){$V_2$}
  \put(43,75){\color{red} $K_1$}
  \put(151,75){\color{red} $K_2$}
\end{overpic}
\captionof{figure}{The knot $K_1$ is transient in $V_1$    
while $K_2$ is persistent in $V_2$.}
\label{fig:example_transient}
\end{center}
\end{example}

The following lemma follows straightforwardly from the definition. 
\begin{lemma}
\label{lem:persistency and in a subspace of a space}
Let $M$ be a compact, connected, proper $3$-submanifold of $S^3$ and  
let $N$ be a compact, connected $3$-submanifold of $M$. 
If a knot $K$ in $N$ is persistent in $M$, then so is in $N$. 
\end{lemma}


A compact, connected, proper $3$-submanifold $M$ of $S^3$ is said to be {\it unknotted} if 
the exterior $E(M)$ is a disjoint union of handlebodies. 
Otherwise $M$ is said to be {\it knotted}. 
We recall that a theorem of Fox \cite{Fox48} says that 
any compact, connected, proper $3$-submanifold of $S^3$ can be re-embedded in $S^3$ 
in such a way that 
its image is unknotted. 
See Scharlemann-Thompson \cite{ST05} and Ozawa-Shimokawa \cite{OS14} for 
certain generalizations and refinements of Fox's theorem.  

\begin{remark}
As mentioned in the Introduction, 
$M$ usually admits many non-isotopic embeddings into $S^3$ with the 
unknotted image. 
The uniqueness holds for a handlebody by Waldhausen \cite{Wal68}. 
Here the uniqueness is up to isotopy for subsets of $S^3$, where 
we recall that two subsets $M_1$ and $M_2$ of $S^3$ is isotopic 
if and only if there exists an orientation-preserving homeomorphism $f$ of $S^3$ 
carrying $M_1$ onto $M_2$. 
If we consider isotopies not between 
the embedded subsets but 
between embeddings, it is far from being unique even for a handlebody. 
This can be explained under a general setting as follows: 
Let $M$ be a compact, connected $3$-submanifold $M$ that can be embedded in $S^3$.  
Then its mapping class group $\MCG_+(M)$ is defined to be 
the group of isotopy classes of orientation-preserving homeomorphisms of $M$. 
We fix an embedding $\iota_0 : M \to S^3$. 
Let $\mathcal{G}_{\iota_0(M)} = \MCG_+(S^3, \iota_0(M))$ be the mapping class group of the pair $(S^3, \iota_0(M))$, 
that is, the group of isotopy classes of orientation-preserving 
homeomorphisms of $S^3$ that preserve $\iota_0(M)$. 
See Koda \cite{Kod11} for details of this group when $M$ is a knotted handlebody. 
We can define an injective homomorphism 
$\iota_0^* : \mathcal{G}_{\iota_0(M)} \hookrightarrow \MCG_+(M)$ 
by assigning to each homeomorphism $\varphi \in \mathcal{G}_{\iota_0(M)}$ 
a unique element $f$ of $\MCG_+(M)$ satisfying  
$\varphi \circ \iota_0 = \iota_0 \circ f$. 
Then the set of embeddings of $M$ into $S^3$ with the same image 
up to isotopy can be identified with the 
right cosets $\iota_0^* (\mathcal{G}_{\iota_0(M)}) 
\backslash \MCG_+(M)$, 
where the identification is given by assigning to $f \in \MCG_+(M)$ 
the embedding $\iota_0 \circ f : M \to S^3$. 
When $M$ is a handlebody of genus at least two, it is clear that this is an infinite set. 
We note that when $\iota_0(M)$ is an unknotted handlebody of genus two, the group 
$\mathcal{G}_{\iota_0(M)}$ is called the genus two Goeritz group of $S^3$ and studied in 
Goeritz \cite{Goe33}, Scharlemann \cite{Sch04}, Akbas \cite{Akb08} and Cho \cite{Cho08}. 

Let $K$ be a knot in $M$. 
Let 
$f$ is contained in the coset 
$\iota_0^* (\mathcal{G}_{\iota_0(M)}) 
\mathrm{id}_M$. 
By the above observation and the definition of the persistency of knots in 
$M \subset S^3$, 
it follows immediately that  
$\iota_0 \circ f (K)$ is persistent in $M$ if and only if 
so is $K$. 
We note that if $f$ is not  
contained in the coset 
$\iota_0^* (\mathcal{G}_{\iota_0(M)}) 
\mathrm{id}_M$, then 
the knot $\iota_0 \circ f (K)$ is not necessarily persistent in $M$ 
even if $K$ is persistent in $M$. 
See Figure \ref{fig:example_equivalence}. 
\begin{center}
\begin{overpic}[width=8cm,clip]{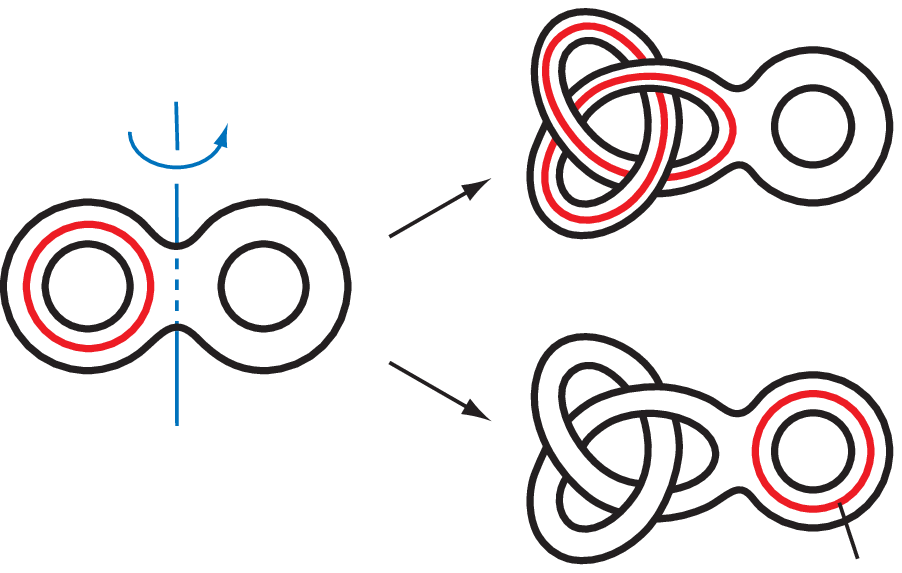}
  \linethickness{3pt}
  \put(162,83){$M$}
  \put(94,96){$\iota_0$}
  \put(80,40){$\iota_0 \circ f$}
  \put(150,5){$\iota_0 \circ f(M)$}
  \put(173,110){\color{red} $K$}
  \put(200,3){\color{red} $\iota_0 \circ f(K)$}
  \put(56,108){\color{blue} $f$}
\end{overpic}
\captionof{figure}{Persistency is an extrinsic property.}
\label{fig:example_equivalence}
\end{center}
Be that as it may, we discuss in this paper extrinsic properties of 
knots embedded submanifold of $S^3$, 
not intrinsic one. 
\end{remark}


\begin{theorem}
\label{thm:transiency and unknottedness}
Let $M$ be a compact, connected, proper $3$-submanifold of $S^3$. 
Then every knot in $M$ is transient if and only if $M$ is unknotted. 
\end{theorem}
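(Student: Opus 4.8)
The plan is to prove the two directions of the biconditional separately, handling the easier implication first and then attacking the substantial one using the filling/binding machinery developed earlier in the paper.

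For the direction asserting that if $M$ is unknotted then every knot in $M$ is transient, I would argue as follows. Suppose $E(M)$ is a disjoint union of handlebodies. Given any knot $K$ in $M$, the goal is to produce a homotopy inside $M$ carrying $K$ to the trivial knot in $S^3$. The key point is that since $S^3 \setminus \Int M$ consists of handlebodies, each component of $\partial M$ bounds a handlebody on the outside, so $S^3$ is obtained from $M$ by attaching $2$-handles and $3$-handles along $\partial M$ (dually, $M$ is $S^3$ with neighborhoods of graphs removed). I would then use the fact that the inclusion $M \hookrightarrow S^3$ kills $\pi_1(M)$ in a controlled way: every element of $\pi_1(M)$ is sent to the trivial element of $\pi_1(S^3) = 1$, and more usefully, a homotopy of $K$ can be realized within $M$ that slides $K$ across compressing disks of $\partial M$ lying just inside $M$. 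Concretely, I would shrink $K$ by a homotopy in $M$ toward a spine, then observe that the spine sits in $S^3$ as an unknotted graph, so $K$ (now lying in a neighborhood of the spine) can be homotoped to a point; since a null-homotopic knot in $S^3$ is the trivial knot up to homotopy, $K$ is transient.

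For the harder direction, the contrapositive is most natural: assuming $M$ is knotted, I must exhibit a persistent knot in $M$. Here is where the earlier results do the heavy lifting. Since $M$ is knotted, $E(M)$ is not a disjoint union of handlebodies, so some component of $E(M)$ is not a handlebody, meaning some boundary component of $M$ fails to bound a handlebody outside. By Theorem~\ref{thm:existence of a knot filling a submanifold}, there exists a knot $K$ in the interior of $M$ that fills up $M$, and by the Notation this $K$ can be taken in $\Nbd(\partial M; M)$. The crux is to show that such a filling knot is persistent. The idea is that if $K$ were transient, a homotopy in $M$ would carry it to a knot $K'$ bounding a disk (or lying in a ball) in $S^3$; I would then analyze the Seifert surface or the spanning disk of $K'$ in $E(M)$ and show that its existence forces $\partial M$ to compress in the exterior, contradicting the non-handlebody structure of $E(M)$. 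More precisely, filling up $M$ means every knot homotopic to $K$ in $M$ has irreducible, boundary-irreducible exterior within $M$; transiency would give a homotopic representative bounding a disk in $S^3$, and pushing this disk relative to $M$ produces an essential compression or sphere contradicting filling.

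The main obstacle I anticipate is the bridge between the intrinsic homotopy statement (transiency, a property of $K$ inside $M$) and the extrinsic topology of how $M$ sits in $S^3$. The delicate step is controlling what happens to a spanning disk of the trivialized knot $K'$ when it is intersected with $\partial M$: one must perform an innermost-disk/outermost-arc argument to show that a disk witnessing triviality in $S^3$, combined with the hypothesis that $K$ fills up $M$, forces $E(M)$ to admit a compressing disk and hence be a union of handlebodies. Reconciling the ambient $S^3$ disk with the filling property of $K$ inside $M$---essentially transferring incompressibility information across $\partial M$ between $M$ and $E(M)$---is where the real work lies, and I expect it will require carefully using that $K$ lies in a collar of $\partial M$ together with the uniqueness of the characteristic compression body from Bonahon's theorem.
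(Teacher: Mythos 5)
Your backward direction contains a genuine gap: you propose to take a knot $K$ that fills up $M$ (via Theorem~\ref{thm:existence of a knot filling a submanifold}) and argue that filling up $M$ already forces persistence. This is false, and the paper itself exhibits a counterexample in the remark immediately following the theorem: there is a knot in a \emph{knotted} genus-two handlebody $V\subset S^3$ that fills up $V$ and yet is transient in $V$. The reason your innermost-disk argument breaks down is that the spanning disk $\Delta$ of a trivialized representative $K'$ meets $\partial M$ in circles whose innermost subdisks may all lie on the $E(M)$ side with boundary essential in $\partial M$; these are compressions of $\partial M$ \emph{into the exterior}, which the hypothesis that $K$ fills up $M$ does not forbid, so the intersection $\Delta\cap\partial M$ cannot be emptied and no contradiction results. (Your argument would work if one additionally assumed $E(M)$ boundary-irreducible, which is exactly the observation opening Section~\ref{sec:Construction of persistent knots}.) The paper's proof avoids this by applying the filling machinery not to $M$ but to a larger region: it takes a non-handlebody component $N$ of $E(M)$, its characteristic compression body $W$, and a component $\Sigma$ of $\partial_-W$; then $\Sigma$ splits $S^3$ into a boundary-irreducible piece $X$ and a piece $Y\supset M\cup W$, and Theorem~\ref{thm:existence of a knot filling a submanifold} produces a knot filling up $Y$ lying in a collar of $\Sigma$, hence inside $W$ and isotopic into $M$. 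Since that knot fills up $Y$ and $X$ is boundary-irreducible, $\Sigma$ remains incompressible in $S^3\setminus K'$ for every $K'$ homotopic to $K$ in $M$, so $K'$ is nontrivial. You gesture at Bonahon's characteristic compression body at the end, but you never use it to relocate the filling condition from $M$ to $Y$, which is the essential move.

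Your forward direction also needs repair, though less seriously. An unknotted $M$ need not collapse onto a graph spine (e.g.\ a standardly embedded nontrivial knot exterior is unknotted but is not a handlebody), so ``shrink $K$ toward a spine'' is not available in general; and ``$K$ can be homotoped to a point, and a null-homotopic knot in $S^3$ is trivial up to homotopy'' conflates free homotopy in $S^3$ (under which every knot is trivial) with the required homotopy \emph{inside $M$} terminating at an embedded trivial circle. The paper's argument is both simpler and correct: write $M=S^3\setminus\Int\,\Nbd(\Gamma)$ for a graph $\Gamma$, take a diagram of the spatial graph $K\cup\Gamma$, and realize self-crossing changes of $K$ as homotopies in the complement of $\Gamma$; finitely many such crossing changes trivialize $K$ in $S^3$.
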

\begin{proof}
Suppose first that $M$ is unknotted, i.e. $M = S^3 \setminus \Int \, \Nbd(\Gamma)$, 
where $\Gamma$ is a graph embedded in $M$. 
Let $K$ be an arbitrary knot in $M$. 
Considering a diagram of the spatial graph $K \cup \Gamma$, 
we easily see that $K$ can be converted into the trivial 
knot in $S^3$ by a finite number of crossing changes of $K$ itself. 
This implies that $K$ is transient in $M$. 

Next suppose that $M$ is knotted. 
Then there exists a component $N$ of the exterior of $M$ that is not a handlebody. 
Let $W$ be the characteristic compression body of $N$. 
We note that if $N$ is boundary-irreducible, then 
$W$ is a collar neighborhood of $\partial N$ in $N$. 
Since $W$ is not a handlebody, we can take a non-empty component 
$\Sigma$ of $\partial _- W$. 
Then $\Sigma$ separates $S^3$ into two components $X$ and $Y$ so that 
$X$ is boundary-irreducible and 
$Y$ contains $M \cup W$. 
See Figure \ref{fig:characteristic_compression_body}. 
\begin{center}
\begin{overpic}[width=7cm,clip]{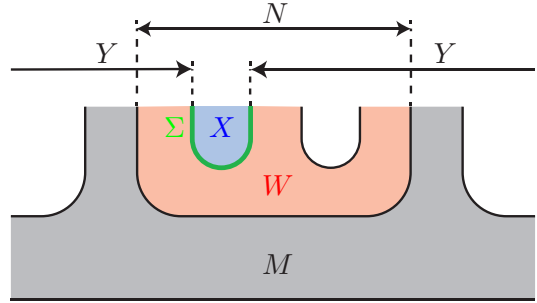}
  \linethickness{3pt}
  \put(95,10){$M$}
  \put(32,90){$Y$}
  \put(160,90){$Y$}
  \put(95,105){$N$}
  \put(95,40){\color{red} $W$}
  \put(75,62){\color{blue} $X$}
  \put(58,62){\color{green} $\Sigma$}
\end{overpic}
\captionof{figure}{The configurations of $M$, $N$, $W$, $\Sigma$, $X$ and $Y$.}
\label{fig:characteristic_compression_body}
\end{center}

By Theorem \ref{thm:existence of a knot filling a submanifold}, 
there exists a knot $K$ lying in 
$\Nbd (\partial Y; Y)$ that fills up $Y$. 
In particular $K$ lies in $W$. 
Thus by an isotopy we can move $K$ to lie within $M$. 
Let $K' \subset M$ be an arbitrary knot with $K \overset{M}{\sim} K'$. 
Since $K$ fills up $Y$, $\Sigma$ is incompressible in $Y \setminus K'$. 
Thus $\Sigma$ is incompressible in $S^3 \setminus K'$. 
This implies that $K'$ is not the trivial knot in $S^3$. 
Therefore $K$ is persistent in $M$. 
\end{proof}

\begin{remark}
Let $M$ be a compact, connected, knotted, proper $3$-submanifold of $S^3$. 
In the proof of Theorem \ref{thm:transiency and unknottedness}, 
we explained how to obtain a knot in $M$ that is persistent in $M$. 
In the process, some readers may have guessed that if a knot $K \subset M$ filled up $M$, 
then $K$ would already be persistent. 
If so, the process to consider the characteristic compression body of a non-handlebody 
component of the exterior in the proof is not necessary. 
However, the guess is not true in fact. 
Let $K$ be the knot in the genus two knotted handlebody $V \subset S^3$ as shown in Figure 
\ref{fig:filling_knot_in_handlebody}. 
\begin{center}
\begin{overpic}[width=4cm,clip]{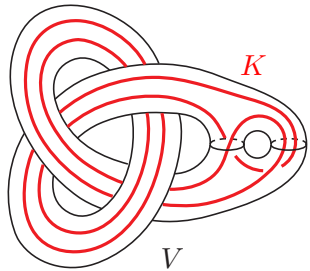}
  \linethickness{3pt}  
  \put(58,0){$V$}
  \put(88,73){\color{red} $K$}
\end{overpic}
\captionof{figure}{The knot $K$ fills up $V$ whereas $K$ is transient in $V$.}
\label{fig:filling_knot_in_handlebody}
\end{center}
Then we see that $K$ fills up $V$ by 
the same reason as in 
the proof of Lemma \ref{lem:existence of a knot filling up a handlebody} 
(see also Section \ref{sec:Concluding remarks}
(\ref{remark:Stallings})) whereas $K$ is apparently transient in $V$. 
\end{remark}

\section{Construction of persistent knots}
\label{sec:Construction of persistent knots}

\subsection{Persistent laminations and persistent knots}
\label{subsec:Persistent laminations and persistent knots}

Let $M$ be a compact, connected, proper 3-submanifold of $S^3$ whose exterior 
consists of boundary-irreducible 3-manifolds. 
It is easy to see that every knot filling up $M$ is persistent in $M$. 
Indeed, if a knot $K$ in $M$ fills up $M$, 
then each component of $\partial M$ will be an incompressible surface in 
the exterior of 
any knot $K'$ homotopic to $K$ in $V$, 
hence $K'$ is not the trivial knot in $S^3$. 
However, the converse is false in general as we see 
in the following proposition:

\begin{proposition}
\label{prop:persistent lamination}
There exists a genus two handlebody $V$ embedded in $S^3$ 
with the boundary-irreducible exterior such that 
there exists a knot $K \subset V$ which is persistent in $V$, 
and which does not fill up $V$. 
\end{proposition}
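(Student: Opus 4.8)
The plan is to prove the proposition by an explicit construction, exploiting the fact (visible already in Lemma~\ref{lem:filling and binding}) that filling up is a \emph{sufficient} but not a necessary condition for persistence. Since a handlebody satisfies the SBKC (as noted after Lemma~\ref{lem:Theorem of Jaco}), Lemma~\ref{lem:filling and binding} tells us that a knot $K$ fills up the genus two handlebody $V$ if and only if its class in $\pi_1(V)\cong F_2$ binds $F_2$, and an element of $F_2=\langle x_1,x_2\rangle$ binds if and only if it is not a power of a primitive element. So the non-filling half of the statement will be purely algebraic: I would arrange $[K]$ to be (conjugate to) a power of a primitive element, for instance a single core class $x_1$, so that $K$ does not bind $\pi_1(V)$ and hence does not fill up $V$. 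The whole difficulty is then concentrated in arranging, simultaneously, that $V$ is embedded in $S^3$ with boundary-irreducible exterior and that this particular $K$ is nonetheless persistent.

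First I would fix the handlebody. I would take $V$ to be a regular neighbourhood of a knotted genus two spine (a knotted wedge of two circles) chosen so that the exterior $E(V)$ is boundary-irreducible, and I would display $V$ together with the knot $K\subset V$ in a figure, with $K$ homotopic in $V$ to the primitive class $x_1$. By the preceding paragraph this immediately yields that $K$ does not fill up $V$: there is some representative $K'$ with $K\overset{V}{\sim}K'$ for which $\partial V$ compresses in its complement.

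The core of the argument is persistence, and here I would bring in the promised tool, a \emph{persistent lamination} (equivalently, an \emph{accidental surface}) in the exterior of $K$. The idea is to exhibit an incompressible, $\partial$-incompressible surface $S$ in $E(K)=S^3\setminus\Int\Nbd(K)$ carrying an accidental curve whose class in $\pi_1(E(K))$ is peripheral, and to organise the construction so that $S$ is carried by a lamination sitting inside $V$ in a way that is insensitive to homotopies of $K$ within $V$. Concretely, for an arbitrary $K'$ with $K\overset{V}{\sim}K'$ I would show that $S$ persists as an essential surface in $S^3\setminus K'$: the homotopy takes place entirely inside $V$, and the lamination is positioned in $E(V)\cup(\text{a collar})$ so that any representative of the class of $K$ must meet it essentially, keeping $S$ incompressible in the new complement. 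Since the exterior of the trivial knot is a solid torus and admits no such essential surface, the survival of $S$ forces $K'$ to be nontrivial in $S^3$; as this holds for every $K'$ homotopic to $K$ in $V$, the knot $K$ is persistent.

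The main obstacle, and the step I would spend the most care on, is exactly this last persistence claim: because $K$ does \emph{not} fill up $V$, I cannot appeal to incompressibility of $\partial V$, which fails after a suitable homotopy. I therefore need an obstruction that is robust under all homotopies supported in $V$ at once, and verifying that the accidental surface genuinely has this robustness, i.e.\ that no homotopy supported in $V$ can render the carried surface compressible or boundary-parallel in the exterior of the image of $K$, is the crux. By comparison, the algebraic non-filling computation and the choice of a boundary-irreducible embedding of $V$ are routine once the lamination is in place.
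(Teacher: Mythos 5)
Your proposal has the same overall shape as the paper's proof (an explicit knotted genus two handlebody, non-filling shown cheaply, persistence shown by an essential object that survives every homotopy of $K$ inside $V$), but the step you yourself flag as the crux is precisely the step you have not supplied, and it is a genuine gap rather than a deferred verification. What is needed is a property of $K$ that is (i) invariant under free homotopy in $V$ and (ii) strong enough to keep an essential surface or lamination essential in the exterior of \emph{every} representative $K'$ with $K\overset{V}{\sim}K'$. The paper's mechanism is concrete: $V$ is taken to be the exterior of Brittenham's branched surface built from a disk spanning the trivial knot, and $K$ is chosen to have algebraic intersection number $\pm 1$ with each of two meridian disks $D_1,D_2$ of $V$; since algebraic intersection number is a homotopy invariant, every $K'$ homotopic to $K$ in $V$ still meets both disks, and the results of Brittenham \cite{Bri99} as generalized by Hirasawa--Kobayashi \cite{HK01} and Lee--Oh \cite{LO02} then supply a persistent lamination in the exterior of any such $K'$, forcing $K'$ to be nontrivial. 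Without identifying this homotopy-invariant carrying condition and this source of the lamination, the sentence ``the lamination is positioned so that any representative of the class of $K$ must meet it essentially'' is an assertion of the conclusion, not an argument for it.

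Moreover, your specific algebraic choice works against any such mechanism. If $[K]$ is conjugate to $x_1$, then its algebraic intersection number with the meridian disk dual to $x_2$ is zero, so $K$ can be homotoped in $V$ off that disk and off the second handle entirely; no intersection-number obstruction of the above kind survives, and a Brittenham-type carrying condition cannot hold for all representatives. In the paper's example $[K]$ is indeed a power of a primitive (it must be, by Lemma \ref{lem:filling and binding} together with the SBKC for handlebodies, since the explicit compression disk $D$ shows $K$ does not fill up $V$), but it is a primitive of the type $x_1x_2$, meeting both meridian disks algebraically once --- not a conjugate of a single generator. One further terminological point: a persistent lamination and an accidental surface are not equivalent notions; the paper uses the former for this proposition and the latter in the separate construction of Proposition \ref{prop:accidental surfaces and persistent knots}.
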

\begin{proof}
Let $V$ be the genus two handlebody in $S^3$ and $K$ be the knot in $V$ 
as shown in Figure \ref{fig:example_persistent}.
\begin{center}
\begin{overpic}[width=4cm,clip]{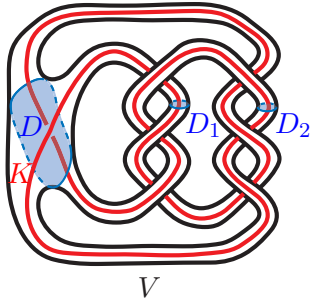}
  \linethickness{3pt}
  \put(50,0){$V$}
  \put(0.6,43){\color{red} $K$}
  \put(5,61){\color{blue} $D$}
  \put(68,62){\color{blue} $D_1$}
  \put(102,62){\color{blue} $D_2$}
\end{overpic}
\captionof{figure}{A handlebody $V$ in $S^3$ with the boundary-irreducible exterior such that 
there exists a knot $K \subset V$ which is persistent in $V$, and which does not fill up $V$.}
\label{fig:example_persistent}
\end{center}
We note that the handlebody $V$ is the exterior of 
the Brittenham's branched surface \cite{Bri99} constructed from 
a disk spanning the trivial knot in $S^3$. 
In particular, the exterior of $V$ is boundary-irreducible. 
We note that $K$ does not fill up $V$ since there exists a compression disk 
$D$ for $\partial V$ in $V \setminus K$ as shown in the figure. 

In the following, we will show that $K$ is persistent in $V$. 
As illustrated in the figure, there are meridian disks
$D_1$, $D_2$ of $V$ each of which
intersects $K$ once and transversely. 
Let $K'$ be any knot homotopic to $K$ in $V$. 
Then $K'$ intersects each of
$D_1$ and $D_2$ at least once. 
By Hirasawa-Kobayashi \cite{HK01} or Lee-Oh \cite{LO02} that generalizes the result of 
Brittenham \cite{Bri99}, in the exterior of $V$ there exists a
{\it persistent lamination}, that is, 
an essential lamination that remains essential after performing any non-trivial Dehn surgeries along $K'$. 
This implies that $K'$ is not the trivial knot, thus $K$ is persistent in $V$. 
\end{proof}

\subsection{Accidental surfaces and persistent knots}
\label{subsec:Accidental surfaces and persistent knots}

A closed essential surface $\Sigma$ in the exterior of a knot $K$ in the 3-sphere is 
called an {\it accidental surface} if there exists an annulus $A$, called an {\it accidental annulus}, 
embedded in the exterior $E(K)$ 
such that 
\begin{itemize}
\item
the interior of $A$ does not interect $\Sigma \cup \partial E(K)$, 
\item
$A \cap \Sigma \neq \emptyset$ and $A \cap \partial E(K) \neq \emptyset$, 
and these are essential simple closed curves in $\Sigma$ and $\partial E(K)$, respectively. 
\end{itemize}

In Ichihara-Ozawa \cite{IO00} it is shown that for each accidental surface in the exterior of a knot in $S^3$, 
the boundary curves of accidental annuli 
determine the unique slope on the boundary of a regular neighborhood of the knot. 
This slope is called an {\it accidental slope} for $\Sigma$. 
By Culler-Gordon-Luecke-Shalen \cite{CGLS87}, an accidental slope is either meridional or integral. 

\begin{proposition}
\label{prop:accidental surfaces and persistent knots}
Let $M$ be a compact, connected, proper $3$-submanifold of $S^3$ with connected boundary such that 
the exterior of $M$ is boundary irreducible.  
Let $K$ be a knot in $M$ such that $\partial M$ is incompressible in $M \setminus K$. 
If $\partial M$ is an accidental surface with integral accidental slope in the exterior of $K$,  
then $K$ is persistent in the submanifold $M$ of $S^3$ bounded by $\Sigma$ and containing $K$. 
\end{proposition}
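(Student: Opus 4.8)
The plan is to establish the contrapositive content of persistency directly: show that \emph{no} knot homotopic to $K$ in $M$ is trivial in $S^3$. Write $\Sigma = \partial M$, and fix an arbitrary knot $K'$ in $M$ with $K' \overset{M}{\sim} K$. Since the exterior of the trivial knot is a solid torus, which contains no closed incompressible surface of positive genus, it suffices to produce such a surface in $E(K') = S^3 \setminus \Int \Nbd(K')$, and the natural candidate is $\Sigma$ itself. The surface $\Sigma$ separates $S^3$ into $M$ (which contains $K'$) and the exterior $E(M)$; since $E(M)$ is boundary-irreducible by hypothesis, $\Sigma$ is automatically incompressible from the $E(M)$-side for every such $K'$. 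Thus the whole problem reduces to showing that $\Sigma$ remains incompressible in $M \setminus \Int \Nbd(K')$ (the side containing $K'$) no matter how $K$ is moved by a homotopy within $M$; note that for $K$ itself this is exactly the given hypothesis, so the content is to propagate it over the entire homotopy class.

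First I would reinterpret the integrality hypothesis. Let $A$ be an accidental annulus with $\partial A = \gamma \cup c$, where $\gamma$ is an essential simple closed curve on $\Sigma$ and $c \subset \partial \Nbd(K)$ carries the accidental slope. Because the slope is integral it meets a meridian of $K$ exactly once, so $c$ is a longitude of $K$; consequently there is a spanning annulus $A'$ in the solid torus $\Nbd(K)$ joining $c$ to the core $K$. Gluing $A$ and $A'$ along $c$ produces an embedded annulus $\hat A$ in $M$ whose interior misses $\Sigma$ and which cobounds $\gamma$ and $K$, so $\gamma$ is isotopic in $M$ to $K$ and hence freely homotopic in $M$ to $K'$. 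This is precisely where integrality is used: a meridional slope would make $c$ bound a meridian disk, forcing $\gamma$ to bound in $M$ rather than to wind once longitudinally around $K$. In this way the hypothesis furnishes a genuinely longitudinal essential curve $\gamma \subset \Sigma$ representing the homotopy class of $K'$.

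With $\gamma$ in hand I would argue by contradiction, assuming that $\Sigma$ compresses in $M \setminus \Int \Nbd(K')$ along a disk $D$ with $\partial D = \delta$ essential on $\Sigma$. Realising the homotopy from $K$ to $K'$ by a finite sequence of crossing changes, each supported in a small ball in $\Int M$ and effected by $\pm 1$-surgery on an unknot disjoint from $\Sigma \cup E(M)$, I would track the accidental annulus through these surgeries and put it in intersection-minimal position with respect to $D$. The goal is to play $\gamma$ against the compressing disk: since $\delta$ bounds $D$ in the complement of $K'$ while $\gamma$ winds once around $K'$ in the longitudinal framing recorded by $\hat A$, an innermost-arc / outermost-disk exchange between $D$ and $\hat A$ should force $\delta$ to be inessential on $\Sigma$, contradicting the choice of $D$. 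At this step I would invoke the analysis of accidental annuli of Ichihara--Ozawa \cite{IO00} together with the Culler--Gordon--Luecke--Shalen dichotomy \cite{CGLS87} pinning the slope to be integral rather than meridional, which is exactly what obstructs the compression. I expect this to be the main obstacle: the delicate point is controlling the interaction of $D$ with the accidental annulus, which after the homotopy is only available as a singular object, and verifying that integrality of the slope survives the crossing changes so that $\gamma$ continues to certify incompressibility of $\Sigma$ on the $M$-side.

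Finally, once $\Sigma$ is known to be incompressible in $M \setminus \Int \Nbd(K')$, combining this with the automatic incompressibility from the $E(M)$-side shows that $\Sigma$ is a closed incompressible surface in $E(K')$. As $\Sigma$ has positive genus, this is impossible when $K'$ is the trivial knot, so every $K'$ with $K' \overset{M}{\sim} K$ is knotted in $S^3$; that is, $K$ is persistent in $M$. I anticipate that the reduction in the first paragraph and the reinterpretation of integrality in the second are routine, with essentially all of the difficulty concentrated in the disk-versus-annulus exchange of the third paragraph.
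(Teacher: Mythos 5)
Your first two paragraphs track the paper's proof closely and are fine: the reduction to showing that $\Sigma=\partial M$ stays incompressible in $M\setminus \Int\,\Nbd(K')$ for every $K'\overset{M}{\sim}K$ (the $E(M)$-side being automatic from boundary-irreducibility), and the use of the integral accidental slope to build the annulus $\hat A$ that isotopes $K$ onto an essential curve in $\Sigma$, are exactly the paper's opening moves. The problem is your third paragraph, which is where the entire difficulty lives and which you yourself flag as unresolved. Realising the homotopy from $K$ to $K'$ by crossing changes and then ``tracking the accidental annulus'' does not work as stated: after even one crossing change $\hat A$ is no longer embedded, and the innermost-arc/outermost-disk exchange you want to run between the compressing disk $D$ and $\hat A$ requires both surfaces to be embedded. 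There is also no mechanism in your sketch by which ``$\gamma$ winds once longitudinally around $K'$'' contradicts the existence of $D$; a compressing disk for $\Sigma$ in $M\setminus K'$ is not directly obstructed by a free homotopy from $\gamma$ to $K'$, and the appeal to Ichihara--Ozawa and Culler--Gordon--Luecke--Shalen only pins down the slope, which you have already used. So the key claim --- that incompressibility of $\Sigma$ on the $M$-side propagates over the whole homotopy class --- is asserted but not proved.

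The paper closes this gap algebraically rather than geometrically, and this is the machinery of Section \ref{sec:Knots filling up a handlebody} that your argument never invokes. Once $K$ is isotoped to $K^*\subset\partial M$ with $\partial M\setminus K^*$ incompressible in $M$ (which follows from your hypothesis that $\partial M$ is incompressible in $M\setminus K$), Lemma \ref{lem:generalization of Corollary 1 of Lyon} converts this into the statement that $K^*$, hence $K$, \emph{binds} $\pi_1(M)$. Binding is a property of the conjugacy class $c_{\pi_1(M)}(K)$, so it is manifestly invariant under homotopy; Lemma \ref{lem:filling and binding} then converts it back into geometry, showing $K$ fills up $M$, i.e.\ $\partial M$ is incompressible in $M\setminus K'$ for \emph{every} $K'\overset{M}{\sim}K$. (The proof of that lemma is exactly the clean version of what you are trying to do: a compressing disk for $\partial M$ in $M\setminus K'$ yields a free product decomposition of $\pi_1(M)$ exhibiting $[K]$ inside a proper free factor, contradicting binding.) To repair your proof, replace the third paragraph by this detour through $\pi_1$; the singular-annulus exchange should be abandoned.
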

\begin{proof}
Let $A \subset M$ be an accidental annulus connecting $K$ and a simple closed curve in $\partial M$. 
Using this annulus, we move $K$ to a knot $K^*$ lying in $\partial M$ by an isotopy. 
Since $\partial M$ is incompressible in $E(K)$, $\partial M \setminus K^*$ is incompressible in $M$. 
Thus by Lemma \ref{lem:generalization of Corollary 1 of Lyon} $K^*$ binds $\pi_1(M)$, so does $K$. 
By Lemma \ref{lem:filling and binding}, $K$ fills up $M$. 
Let $K' \subset M$ be an arbitrary knot lying in the interior of $M$ with $K \overset{M}{\sim} K'$. 
Since $K$ fills up $M$, $\partial M$ is incompressible in $M \setminus K'$. 
Thus $\partial M$ is incompressible in $S^3 \setminus K'$. 
This implies that $K'$ is not the trivial knot in $S^3$. 
Therefore $K$ is persistent in $M$. 
\end{proof}

\section{Transient number of knots}
\label{sec:Transient number of knots}

Let $K$ be a knot in $S^3$. 
A {\it crossing move} on a knot $K$ is the operation of passing one strand of $K$ 
through another. 
The {\it unknotting number} $u(K)$ of $K$, 
which was first defined by Wendt \cite{Wen37}, is then 
the minimal number of crossing moves required to convert
the knot into the trivial knot. 
We note that for each crossing move, we can associate a simple arc $\alpha$ in $S^3$ such that 
$\alpha \cap K = \partial \alpha$ and 
the crossing move is performed in $\Nbd (\alpha)$. 

An {\it unknotting tunnel system} for $K$ is a set $\{ \gamma_1$, $\gamma_2 , \ldots , \gamma_n \}$ 
of mutually disjoint simple arcs in $S^3$ 
such that 
$\gamma_i \cap K = \partial \gamma_i$ for each $i \in \{ 1 , 2 , \ldots, n \}$ and 
the exterior of the union $K \cup \gamma_1 \cup \gamma_2 \cup \cdots \cup \gamma_n$ is a handlebody. 
The {\it tunnel number} $t(K)$ of $K$, which was first defined by Clark \cite{Cla80}, is 
the minimal number of arcs in any of unknotting tunnel systems for $K$. 

We introduce a new invariant for a knot in the 3-sphere strongly related to the above two 
classical invariants. 
We define {\it transient system} for $K$ to be a set $\{ \tau_1$, $\tau_2 , \ldots , \tau_n \}$ 
of mutually disjoint simple arcs in $S^3$ 
such that 
$\tau_i \cap K = \partial \tau_i$ for each $i \in \{ 1 , 2 , \ldots, n \}$ and 
$K$ is transient in $\Nbd (K \cup \tau_1 \cup \tau_2 \cup \cdots \cup \tau_n)$. 
The {\it transient number} $\tr (K)$ of $K$ is defined to be 
the minimal number of arcs in any of transient systems for $K$.

\begin{proposition}
\label{prop:trivial inequalities for u, t and tr}
Let $K$ be a knot in $S^3$. 
Then we have 
$\tr (K) \leqslant u(K)$ and 
$\tr (K) \leqslant t(K)$.
\end{proposition}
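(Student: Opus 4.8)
The plan is to establish both inequalities by the same strategy: take a minimal system realizing the classical invariant (unknotting number or tunnel number), and show directly that this system already serves as a transient system for $K$. Since $\tr(K)$ is defined as the minimal size of a transient system, exhibiting a transient system of size $u(K)$ (resp. $t(K)$) immediately gives $\tr(K) \leqslant u(K)$ (resp. $\tr(K) \leqslant t(K)$).

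For the inequality $\tr(K) \leqslant u(K)$, I would start from a minimal unknotting sequence for $K$, consisting of $u(K)$ crossing moves that convert $K$ into the trivial knot. As the paper already notes, each crossing move can be realized inside a regular neighborhood $\Nbd(\alpha_i)$ of an associated simple arc $\alpha_i$ with $\alpha_i \cap K = \partial \alpha_i$. After a small perturbation I would arrange the arcs $\tau_1, \ldots, \tau_{u(K)}$ to be mutually disjoint, so that they form a candidate transient system. The key point is that a crossing move on $K$ performed in $\Nbd(\tau_i)$ is realized by a homotopy of $K$ supported in $\Nbd(K \cup \tau_i)$: pushing one strand through the other is exactly a homotopy inside that handlebody. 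Performing all $u(K)$ crossing moves in this way gives a homotopy of $K$ within $\Nbd(K \cup \tau_1 \cup \cdots \cup \tau_{u(K)})$ ending at the trivial knot, which is precisely the statement that $K$ is transient in that neighborhood. Hence $\{\tau_1, \ldots, \tau_{u(K)}\}$ is a transient system, giving $\tr(K) \leqslant u(K)$.

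For the inequality $\tr(K) \leqslant t(K)$, I would take a minimal unknotting tunnel system $\{\gamma_1, \ldots, \gamma_{t(K)}\}$, so that the exterior of $K \cup \gamma_1 \cup \cdots \cup \gamma_{t(K)}$ is a handlebody, which is equivalent to saying that $\Nbd(K \cup \gamma_1 \cup \cdots \cup \gamma_{t(K)})$ is an unknotted handlebody in $S^3$. By Theorem~\ref{thm:transiency and unknottedness}, every knot in an unknotted submanifold is transient in it; in particular $K$ itself is transient in $V := \Nbd(K \cup \gamma_1 \cup \cdots \cup \gamma_{t(K)})$. Therefore the tunnel system $\{\gamma_1, \ldots, \gamma_{t(K)}\}$ is already a transient system for $K$, and $\tr(K) \leqslant t(K)$.

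**The main obstacle** I anticipate is the first inequality, specifically the careful bookkeeping needed to turn the $u(K)$ crossing moves into a homotopy supported in a \emph{single} regular neighborhood of $K$ together with \emph{disjoint} arcs. One must verify that the arcs $\tau_i$ can be chosen simultaneously disjoint (rather than only defined one at a time along the unknotting sequence) and that the successive homotopies, each supported near one arc, can be amalgamated into a homotopy of the original $K$ within the fixed neighborhood $\Nbd(K \cup \tau_1 \cup \cdots \cup \tau_{u(K)})$. The second inequality is comparatively immediate once the equivalence ``unknotting tunnel system $\Leftrightarrow$ unknotted handlebody neighborhood'' is recognized, after which Theorem~\ref{thm:transiency and unknottedness} does all the work.
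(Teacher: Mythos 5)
Your proposal is correct and follows essentially the same argument as the paper: the arcs associated to a minimal unknotting sequence give a transient system because crossing moves are homotopies supported in the neighborhood, and a minimal tunnel system gives an unknotted handlebody neighborhood to which Theorem \ref{thm:transiency and unknottedness} applies. The bookkeeping concerns you raise about disjointness and amalgamating the homotopies are legitimate but routine, and the paper treats them just as briefly.
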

\begin{proof}
Suppose that $u(K) = m$. 
Let $\{ \alpha_1, \alpha_2, \ldots, \alpha_m \}$ be a set of mutually disjoint simple arcs associated to 
$m$ crossing moves that convert $K$ into the trivial knot. 
Then $K$ is transient in the handlebody 
$\Nbd (K \cup \alpha_1 \cup \alpha_2 \cup \cdots \cup \alpha_m)$. 
In other words, $\{ \alpha_1, \alpha_2, \ldots, \alpha_m \}$ is a transient tunnel system for $K$. 
This implies that $\tr (K) \leqslant m$. 

Suppose that $t(K) = n$. 
Let $\{ \gamma_1, \gamma_2, \ldots, \gamma_n \}$ be an unknotting tunnel system for $K$. 
Since the handlebody $\Nbd (K \cup \gamma_1 \cup \gamma_2 \cup \cdots \cup \gamma_n)$ 
is unknotted, $K$ is transient in 
$\Nbd (K \cup \gamma_1 \cup \gamma_2 \cup \cdots \cup \gamma_n)$ by 
Theorem \ref{thm:transiency and unknottedness}. 
This implies that $\tr (K) \leqslant n$. 
\end{proof}

\begin{proposition}
\label{prop:difference between tr and u, t}
There exists a knot $K$ in $S^3$ such that 
$\tr (K) = 1$ and $u (K) = t (K) = 2$. 
\end{proposition}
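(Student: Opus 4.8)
The plan is to exhibit an explicit knot $K$ together with a single transient arc $\tau$ witnessing $\tr(K)=1$, and then to argue separately that $u(K)=2$ and $t(K)=2$. For the upper bounds, by Proposition \ref{prop:trivial inequalities for u, t and tr} we already have $\tr(K)\leqslant u(K)$ and $\tr(K)\leqslant t(K)$, so once I produce a knot with $\tr(K)=1$ it suffices to check that $u(K)\leqslant 2$, $t(K)\leqslant 2$, and then rule out $u(K)=1$ and $t(K)=1$. The natural candidate is a knot that is known in the literature to have unknotting number and tunnel number exactly $2$ (for instance a suitable member of a well-studied family such as a connected sum or a specific cable/pretzel knot), so that the hard analytic work of computing $u(K)$ and $t(K)$ can be imported rather than redone.

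First I would describe $K$ by a diagram together with a distinguished arc $\tau$ with $\tau\cap K=\partial\tau$, and verify directly that $K$ is transient in $V=\Nbd(K\cup\tau)$. The cleanest way to do this is to check that $V$ is an \emph{unknotted} genus-two handlebody in $S^3$, i.e.\ that $E(V)$ is a handlebody; then Theorem \ref{thm:transiency and unknottedness} immediately gives that every knot in $V$, in particular $K$ itself, is transient, so $\tr(K)\leqslant 1$. Since $K$ is nontrivial (it has $u(K)=t(K)=2>0$), it is not transient in a ball, so $\tr(K)\geqslant 1$ and hence $\tr(K)=1$. The verification that $E(V)$ is a handlebody is a concrete spatial-graph computation: one exhibits a spine of $E(V)$ or performs the crossing changes along $\tau$ explicitly, showing the union $K\cup\tau$ is an unknotted $\theta$-graph (equivalently, that $K$ becomes trivial after the crossing change supported in $\Nbd(\tau)$).

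For the lower bounds I would argue that $u(K)\geqslant 2$ and $t(K)\geqslant 2$, i.e.\ that $K$ is neither an unknotting-number-one nor a tunnel-number-one knot. These are the substantive inequalities, and I would obtain them by choosing $K$ so that both are already established or follow from standard obstructions: for unknotting number, one can invoke a signature/Goeritz-form or Montesinos-trick obstruction (a $u=1$ knot has a specific double-branched-cover structure, namely a half-integer Dehn surgery description), and for tunnel number one can use that tunnel-number-one knots are strongly related to $2$-bridge or specific Heegaard-genus-two data, ruled out by the chosen $K$'s exterior. The most economical route is to take $K$ to be a connected sum $K=K_1\# K_2$ of two knots each of tunnel number and unknotting number one (so that $u(K)=2$ and $t(K)=2$ by additivity/subadditivity results of Scharlemann--Schultens and Schubert-type lower bounds), since for composite knots both invariants being exactly $2$ is classical, while the single transient arc joining the two summands through an unknotting crossing exhibits $\tr(K)=1$.

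The main obstacle I expect is the lower bound $t(K)\geqslant 2$: tunnel number is genuinely delicate, and establishing $t(K)=2$ requires either citing a known computation for the specific $K$ or invoking an additivity theorem for tunnel number under connected sum, where the relevant inequality $t(K_1\#K_2)\geqslant t(K_1)+t(K_2)$ (equivalently the behavior of Heegaard genus of knot exteriors under annulus sum) is nontrivial and due to Scharlemann--Schultens. By contrast, $u(K)=2$ and the transient-arc construction are comparatively routine. Accordingly, I would anchor the whole proof on a carefully chosen $K$ (most likely a connected sum of two trefoils or a trefoil with a figure-eight) for which $u=t=2$ is already on record, reducing my task to the clean verification that a single crossing-change arc makes $K\cup\tau$ an unknotted handlebody and hence $\tr(K)=1$.
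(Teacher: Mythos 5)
The central claim, $\tr(K)=1$, is where your argument breaks down, and it does so for a structural reason. Your proposed verification --- show that $V=\Nbd(K\cup\tau)$ is an \emph{unknotted} genus-two handlebody and then invoke Theorem \ref{thm:transiency and unknottedness} --- can never succeed for a knot with $t(K)=2$: by definition, $E(K\cup\tau)=E(V)$ being a handlebody is precisely the statement that $\{\tau\}$ is an unknotting tunnel system, i.e.\ $t(K)\leqslant 1$. The whole point of the proposition is that a single arc can witness transience while $\Nbd(K\cup\tau)$ remains \emph{knotted}, so transience must be verified by exhibiting an explicit homotopy of $K$ to an unknot inside a knotted handlebody, not by appealing to unknottedness of that handlebody.

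Your concrete candidate also fails. If $K=K_1\# K_2$ (say two trefoils) and $\tau$ joins the two summands, then $V=\Nbd(K\cup\tau)$ is a neighborhood of a wedge $K_1'\vee K_2'$ of the two summands, and its exterior is a boundary connected sum of the two knot exteriors. Pushing a parallel copy of $\partial E(K_1')$ slightly into that exterior gives a torus bounding the knot exterior of $K_1$ on one side and a knotted solid torus $Y\supset V$ on the other, and $K$ has winding number $\pm 1$ in $Y$. Every knot $K'$ homotopic to $K$ in $V$ is homotopic to $K$ in $Y$, hence also has winding number $\pm1$ in $Y$; it is therefore a nontrivial satellite with companion $K_1$ (its genus is at least $g(K_1)\geqslant 1$), so $K'$ is knotted. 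Thus $K$ is \emph{persistent} in this $V$, and your arc is not a transient system. (Your lower-bound strategy via primality of unknotting-number-one and tunnel-number-one knots is fine as far as it goes --- though note that additivity of $u$ under connected sum is open in general, and what you actually need is Scharlemann's theorem that unknotting-number-one knots are prime --- but it is moot once $\tr(K)=1$ cannot be established for a composite knot.) The paper sidesteps all of this by taking $K$ to be a specific genus-one satellite of the figure-eight knot together with an explicitly drawn transient arc inside a knotted handlebody, and obtains the lower bounds from the classification of non-simple genus-one unknotting-number-one knots (Kobayashi, Scharlemann--Thompson) and of non-simple tunnel-number-one knots (Morimoto--Sakuma).
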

\begin{proof}
Let $K$ be the satelite knot of the figure eight knot 
shown in Figure \ref{fig:example_t_u_and_tr}. 
\begin{center}
\begin{overpic}[width=3.5cm,clip]{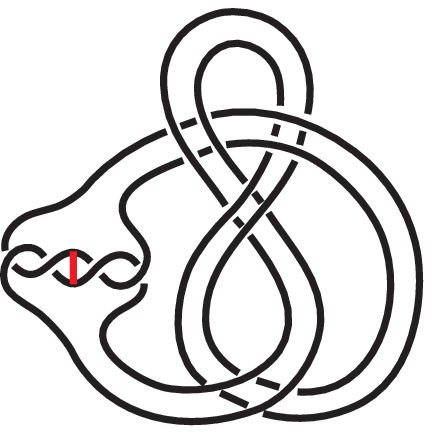}
  \linethickness{3pt}
  \put(55,0){$K$}
  \put(15,40){\color{red} $\tau$}
\end{overpic}
\captionof{figure}{A knot $K$ with $\tr (K) = 1$ and $u (K) = t (K) = 2$.}
\label{fig:example_t_u_and_tr}
\end{center}
Clearly, the genus of $K$ is one. 
The transient number of $K$ is one because $K$ admits a transient tunnel as shown in the figure. 
In Kobayashi \cite{Kob89} and Scharlemann-Thompson \cite{ST89}, it is proved that 
the only non-simple knots of genus one and unknotting number one are the doubled knots.
It follows that the unknotting number of $K$ is at least two. 
It is then straightforward to see that the unknotting number is exactly two. 

It is proved by Morimoto-Sakuma \cite{MS91} that 
the only non-simple knots having unknotting tunnels are certain satellites of torus knots. 
it follows that the tunnel number of $K$ is at least two. 
It is then straightforward to see that the tunnel number is exactly two.  
\end{proof}

\section{Concluding remarks}
\label{sec:Concluding remarks}

\begin{enumerate}
\item
Let $M$ be a compact, connected, proper 3-submanifold of $S^3$. 
Let $K$ be a knot in the interior of $M$. 
In the earlier sections, we have introduced various homotopic properties of knots in $M$. 
We summarize their relations. 
We say that $K$ is {\it accidental} in $M$ if $K$ can be moved to 
a knot $K'$ in $\partial M$ by 
a homotopy in $M$ so that 
$\partial M \setminus K'$ is incompressible in $M$. 
Then we have the following: 
\begin{enumerate}
\item
\label{summary:accidental - bind}
If $K$ is accidental, then 
$K$ binds $\pi_1(M)$ (c.f. Lemma \ref{lem:generalization of Corollary 1 of Lyon}). 
\item
\label{summary:bind- fill}
If $K$ binds $\pi_1(M)$, then 
$K$ fills up $M$ (c.f. Lemma \ref{lem:filling and binding}). 
\item
\label{summary:accidental - fill} 
By (\ref{summary:accidental - bind}) and (\ref{summary:bind- fill}), 
if $K$ is accidental, then 
$K$ fills up $M$. 
\end{enumerate}
The converse of each of them is false. 
To see this, suppose that $M$ is the exterior of a non-trivial knot in $S^3$. 
We note that $\pi_1(M)$ is freely indecomposable by Kneser Conjecture. 
Let $K$ be a knot in $M$ that can not be moved by any homotopy in $M$ 
to lie in $\partial M$. 
Such a knot $K$ always exists by, for instance, 
Brin-Johannson-Scott \cite{BJS85}. 
This implies that 
$K$ binds $\pi_1(M)$ whereas 
$K$ is not accidental in $M$. 
In this example, we also see that 
$K$ fills up $M$ whereas 
$K$ is not accidental in $M$. 
The remark after the proof of Lemma \ref{lem:filling and binding} shows that 
the converse of Lemma \ref{lem:filling and binding} is false. 
However, the 3-manifold $M$ introduced in the example is not embeddable in $S^3$. 
To have an counterexample of the converse of (\ref{summary:bind- fill}), 
let $\Sigma$ be a closed orientable surface of genus at least one. 
Let $M$ be an orientable 3-manifold obtained by attaching 
a 1-handle to each component of $\partial (\Sigma \times [0,1])$. 
We note that $M$ can be embedded in $S^3$. 
Let $D_0$ and $D_1$ be the co-core of the 1-handles. 
Then we can easily show as in the remark that there exists a knot $K$ in $M$ intersecting 
each of $D_0$ and $D_1$ once and transversely that fills up $M$ whereas 
$K$ does not bind $\pi_1(M)$. 
The relations of these the three intrinsic properties are 
shown on the left-hand side in Figure \ref{relation_maps}. 
\begin{center}
\begin{overpic}[width=13cm,clip]{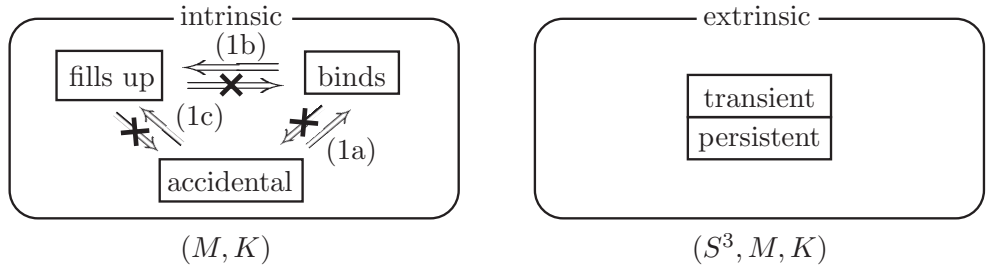}
  \linethickness{3pt}
  \put(60,25){accidental}
  \put(23,65){fills up}
  \put(117,65){binds}
  \put(261,42){persistent}
  \put(263,57){transient}
  \put(65,0){$(M, K)$}
  \put(258.5,0){$(S^3, M, K)$}
  \put(65,88){intrinsic}
  \put(263,88){extrinsic}
  \put(120,37){(\ref{summary:accidental - bind})}
  \put(78,77){(\ref{summary:bind- fill})}
  \put(63,50){(\ref{summary:accidental - fill})}
\end{overpic}
\captionof{figure}{Corelation diagrams of extrinsic and intrinsic properties.}
\label{relation_maps}
\end{center}
It is worth noting that to show that a given knot $K$ in $M \subset S^3$ is persistent, 
we have used intrinsic property of $K$ in a subset of $S^3$ containing $M$. 
See Theorem \ref{thm:transiency and unknottedness} and Propositions \ref{prop:persistent lamination} and 
\ref{prop:accidental surfaces and persistent knots}. 


\vspace{1em}

\item
\label{remark:Stallings}
Let $F_g$ be a rank $g$ free group. 
As mentioned in Section \ref{sec:Knots filling up a handlebody}, 
an algorithm to detect whether a given element $x$ of a free group $F_g$ 
binds $F_g$ is described by Stallings \cite{Sta99} using the combinatorics of 
its Whitehead graph. 
In fact, the following is proved: 
\begin{theorem}[Stallings \cite{Sta99}]
Let $x$ be a cyclically reduced word on 
$X_g = \{ x_1, x_2 , \ldots , x_g \}$. 
If the Whitehead graph 
of $x$ is connected and contains no cut vertex, 
then $x$ binds $F_g$. 
\end{theorem}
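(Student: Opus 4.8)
The plan is to prove the contrapositive: if $x$ does not bind $F_g$, then the Whitehead graph $\Gamma_x$ is disconnected or contains a cut vertex. First I would install a topological dictionary. Let $V$ be the genus $g$ handlebody whose $1$-vertex spine is the rose dual to $X_g$, and let $\mathcal{D} = D_1 \cup \cdots \cup D_g$ be the complete meridian system dual to this spine, so that cutting $V$ along $\mathcal{D}$ produces a $3$-ball $B$ whose boundary sphere carries $2g$ scars $D_1^{\pm}, \ldots, D_g^{\pm}$ labelled by $X_g^{\pm}$. Representing $x$ by a knot $K$ in the interior of $V$ transverse to $\mathcal{D}$ and meeting it in exactly $|x|$ points (possible since $x$ is cyclically reduced), the arcs $K \cap B$, with endpoints on the scars, form on $\partial B$ a graph whose vertices are the scars $X_g^{\pm}$ and whose edges are these arcs; this is precisely $\Gamma_x$. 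The bridge between the group theory and this picture is supplied by the earlier results: since a handlebody satisfies the SBKC, Lemma \ref{lem:filling and binding} shows that $x$ binds $F_g = \pi_1(V)$ if and only if the representative $K$ fills up $V$, and Lemma \ref{lem:generalization of Corollary 1 of Lyon} records the companion incompressibility criterion that I will use to detect the failure of this condition.

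Next I would extract a Whitehead move from the failure to bind. If $x$ lies in a proper free factor, then by Lemma \ref{lem:spine corresponding to a given basis} there is a basis $Y = \{ y_1, \ldots, y_g \}$ of $F_g$ and a $1$-vertex spine of $V$ compatible with $Y$ for which $x$ is a word in $y_1, \ldots, y_{g-1}$; dually there is a meridian disk $E$ of $V$, the one dual to $y_g$, off which $K$ can be pushed, i.e.\ a compressing disk for $\partial V$ disjoint from a representative homotopic to $K$. Putting $E$ in minimal position with respect to $\mathcal{D}$ and choosing an outermost subdisk of $E$ cut off by $\mathcal{D}$ (by an innermost-disk/outermost-arc argument in the spirit of Lemma \ref{lem:cut-system disjoint from a compression disk}), its outermost arc runs over a single scar $D_i^{\epsilon}$ and encodes a Whitehead automorphism of the second kind. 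The engine of the argument is then Whitehead's length-change lemma (see Magnus--Karrass--Solitar \cite{MKS76} and Stallings \cite{Sta99}): the disjointness of $E$ from the edge-arcs of $\Gamma_x$ forces this move not to increase $|x|$, and the boundary arc of $E$ on $\partial B$, being disjoint from every edge of $\Gamma_x$, separates the scars into two nonempty families across which no edge passes except possibly through the single vertex $D_i^{\epsilon}$. This yields a disconnection of $\Gamma_x$ (when no edge uses that vertex, in particular in the degenerate case where $E$ is parallel to some $D_i$, leaving the pair $x_i, x_i^{-1}$ unused) or a cut vertex at $D_i^{\epsilon}$, as required.

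The step I expect to be the main obstacle is exactly this combinatorial translation, together with the bookkeeping that closes the gap between the given \emph{cyclically reduced} word and a length-minimal representative. The delicate points are: maintaining minimal position of $E$ against $\mathcal{D}$ while performing the outermost-disk surgeries; accounting for the self-intersections of the immersed representative of $x$, which are permitted once we pass from isotopy to the homotopy class used in the filling criterion; and, most seriously, arranging that a single non-length-increasing Whitehead move is genuinely \emph{visible} as a cut of $\Gamma_x$ rather than merely of the graph of some far-off representative. The standard device for the last issue is Whitehead's peak-reduction theorem, which guarantees that if any element of the automorphic orbit of $x$ omits a basis letter, then one may travel there by Whitehead moves of non-increasing length, so that a length-reducing or length-preserving move is present at $x$ itself and hence manifests, via the length-change lemma, as the disconnection or cut vertex sought. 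Assembling these pieces gives the contrapositive and therefore the theorem.
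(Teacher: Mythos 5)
You should first note that the paper does not actually prove this statement: it is quoted from Stallings \cite{Sta99}, and the surrounding discussion establishes only the special case in which $x$ is represented by an \emph{embedded} curve $K$ on $\partial V_g$, by combining Lemma \ref{lem:generalization of Corollary 1 of Lyon} ($K$ binds $F_g$ if and only if $\partial V_g \setminus K$ is incompressible) with Starr's criterion \cite{Sta92} (see also \cite{Wu96}) that incompressibility is equivalent to connectivity and absence of cut vertices for the cut-open planar graph, which is the Whitehead graph. Your route is genuinely different: it is the classical Whitehead-automorphism argument. In its final form it is correct --- a separable cyclically reduced word either omits a generator (so $\Gamma_x$ has isolated vertices and is disconnected) or, by the free-factor form of peak reduction, admits a non-degenerate Whitehead automorphism of the second kind that does not increase its length, which by the length-change lemma forces a disconnection or a cut vertex. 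What the paper's geometric route buys is a transparent picture, but only for embedded curves on the boundary; what your algebraic route buys is the general statement, at the cost of importing the Whitehead machinery wholesale.

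Two points in your write-up need repair. First, the topological dictionary is not set up correctly: for a knot $K$ in the \emph{interior} of $V$, the arcs of $K \cap B$ are properly embedded in the ball $B$, not drawn on $\partial B$, so $\Gamma_x$ is only an abstract graph and the claim that the arc $\partial E \cap \partial B$ ``is disjoint from every edge of $\Gamma_x$'' and therefore separates the scars is vacuous. The planar picture you invoke exists only for an embedded curve on $\partial V$ (the Lyon--Starr situation above) or, for an arbitrary element, in the sphere-system picture in $\#_g(S^2 \times S^1)$ that Stallings actually uses. Compounding this, $E$ is disjoint only from some representative $K'$ homotopic to $K$, whose intersection word with $\mathcal{D}$ need not be cyclically reduced, so no inequality about $|x|$ can be read off from $E$ directly --- a difficulty you correctly identify. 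Second, once you repair this by invoking peak reduction, the topology drops out entirely and the proof consists of citing two substantial results (the cut-vertex/length-change lemma and the assertion that a separable word can be carried to a letter-omitting word by non-increasing Whitehead moves) whose conjunction \emph{is} the theorem; the latter in particular hides the non-obvious refinement that a length-reducing move applied to a word omitting $x_g$ can be chosen so as to keep $x_g$ omitted. If those citations are admissible, your argument closes; but be aware that essentially all of its content lives in the cited lemmas rather than in the proposal itself.
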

For a simple closed curve in the boundary of a handlebody, this can be seen clearly as follows. 
Let $x$ be an element of the rank $g$ free group $F_g$. 
We identify $F_g$ with the fundamental group of a genus $g$ handlebody. 
In the case of $M=V_g$ in Lemma \ref{lem:generalization of Corollary 1 of Lyon}, 
which is actually Lyon \cite[Corollary $1$]{Lyo80}, 
we have seen that 
if $x$ can be represented by an oriented simple closed curve $K$ in $\partial V_g$, 
then $x$ binds $F_g$ if and only if $ \partial V_g \setminus K$ is incompressible. 
On the other hand, Starr \cite{Sta92} (see also Wu \cite[Theorem 1.2]{Wu96}) 
showed that $\partial V_g \setminus K$ is incompressible if and only 
if there is a complete meridian disk system 
${D_1, D_2, \ldots, D_g}$ of $V_g$ such that 
the planar graph with ``fat" vertices obtained by cutting $\partial V_g$ along $\bigcup_{i=1}^g D_i$ 
is connected and contains no cut vertex. 
This graph is actually nothing else but the Whitehead graph of $x$. 
(As explained in Stallings \cite{Sta99}, we can obtain a gemetric interpretation 
of this for an arbitrary element of $F_g$ if we consider the connected sum of 
$g$ copies of $S^2 \times S^1$ instead of $V_g$.) 

%

\vspace{1em}

\item
Let $M$ be a compact, connected, proper 3-submanifold of $S^3$. 
In the proofs 
of Theorem \ref{thm:transiency and unknottedness} and Propositions \ref{prop:persistent lamination} and 
\ref{prop:accidental surfaces and persistent knots}, 
we provided a way to show that a given knot $K \subset M$ is persistent in $M$. 
The key idea there is to find an essential surface (or lamination) 
in the exterior of $M$ that is also essential in the exterior of 
any knot $K'$ homotopic to $K$ in $M$. 
As mentioned in the Introduction, another way to show the persistency 
was provided by Letscher \cite{Let12} that uses 
what he calls the {\it persistent Alexander polynomial}. 

\begin{problem} 
Provide more methods for detecting whether a given knot $K \subset M$ is persistent.
\end{problem}

\item
As we have summarized in Figure \ref{relation_maps}, 
the only extrinsic property of knots in a 3-subspace of $S^3$ 
we have considered in the present paper was transience (or persistency). 
Using this property, we have actually gotten an ``if and only if" condition 
for a 3-subspace of $S^3$ being unknotted in Theorem \ref{thm:transiency and unknottedness}. 
This is a first step for a relative version of Fox's program and 
a futher progress will be expected. 
\begin{problem} 
Consider other extrinsic properties of knots in $M \subset S^3$ 
to characterize how $M$ is embedded in $S^3$.
\end{problem}
We note that the case where $M$ is a handlebody is already a very interesting problem. 
See e.g. 
Ishii \cite{Ish08}, Koda \cite{Kod11} and Koda-Ozawa \cite{KO15}. 

\vspace{1em}

\item
As mentioned in the Introduction, the unknottedness of 
a 3-submanifold can be considered for an arbitrary closed, connected $3$-manifold. 
Thus it is natural to ask the following: 
\begin{question} 
Generalize Theorem \ref{thm:transiency and unknottedness} 
for $M$ in an arbitrary $3$-manifold $N$.
\end{question}

\vspace{1em}

\item
Finally, in Section \ref{sec:Transient number of knots}, we defined an integer-valued 
invariant $tr(K)$, the transient number, for a knot $K$ in $S^3$. 
This invariant is nice in the sense that 
it shows the knots of unknotting number 1 and those of tunnel number 1 from 
the same perspective as we have seen in Proposition \ref{prop:trivial inequalities for u, t and tr}. 
However, it remains unknown whether there exists a knot whose transient number 
is more than 1. 
\begin{question} 
The transient number $\tr(K)$ can be arbitrary large?
\end{question}

\end{enumerate}

\section*{Acknowledgments} 
The authors wish to express their gratitude to 
Makoto Sakuma and Masakazu Teragaito 
for helpful suggestions and comments. 
Part of this work was carried out while the first-named author was visiting
Universit\`a di Pisa as a
JSPS Postdoctoral Fellow for Reserch Abroad.	
He is grateful to the university and its staffs for
the warm hospitality.

\end{document}